\numberwithin{equation}{section}
\newtheorem{theorem}{Theorem}[section]
\newtheorem{corollary}{Corollary}[section]
\newtheorem{definition}{Definition}[section]
\newtheorem{lemma}{Lemma}[section]
\newtheorem{proposition}{Proposition}[section]
\newtheorem{remark}{Remark}[section]
\newtheorem{example}{Example}[section]
\newcommand{\A}{{\mathcal A}}
\newcommand{\M}{{\mathcal M}}
\newcommand{\N}{{\mathcal N}}
\newcommand{\8}{\infty}
\newcommand{\el}{\ell}
\newcommand{\be}{\begin{eqnarray*}}
\newcommand{\ee}{\end{eqnarray*}}
\newcommand{\beq}{\begin{equation}}
\newcommand{\eeq}{\end{equation}}
\newcommand{\beqn}{\begin{equation*}}
\newcommand{\eeqn}{\end{equation*}}
\newcommand{\bs}{\begin{split}}
\newcommand{\es}{\end{split}}
\numberwithin{equation}{section}
\begin{document}

\title{Noncommutative weak Orlicz spaces and martingale inequalities}

\thanks{{\it 2000 Mathematics Subject Classification:} 46L53, 46L52, 60G42.}
\thanks{{\it Key words:} von Neumann algebra, noncommutative $L_p$-space,
 noncommutative weak Orlicz space, noncommutative martingale, weak type $\Phi$-moment inequality.}

\author{Turdebek N. Bekjan}

\address{College of Mathematics and Systems Science, Xinjiang
University, \newblock Urumqi 830046, China}

\thanks{T.B is partially supported by NSFC grant No.11071204.}

\author{Zeqian Chen}

\address{Wuhan Institute of Physics and Mathematics, Chinese
Academy of Sciences, West District 30, Xiao-Hong-Shan, Wuhan 430071, China}


\author{Peide Liu}

\address{School of Mathematics and Statistics, Wuhan University, Wuhan 430072, China}


\author{Yong Jiao}

\address {Institute of Probability and Statistics , Central South University, Changsha 410075, China}

\date{}
\maketitle

\markboth{T.N.Bekjan, et al}%
{Noncommutative weak Orlicz spaces}

\begin{abstract}
This paper is devoted to the study of noncommutative weak Orlicz spaces and martingale inequalities. Marcinkiewicz interpolation theorem is extended to include noncommutative weak Orlicz spaces as interpolation classes. In particular, we prove the weak type $\Phi$-moment Burkholder-Gundy inequality for noncommutative martingales through establishing a weak type $\Phi$-moment noncommutative Khintchine's inequality for Rademacher's random variables.
\end{abstract}

\section{Introduction}\label{Intro}

Recently, the first two named authors proved an $\Phi$-moment Burkholder-Gundy inequality for noncommutative martingales in \cite{BC}, i.e., the noncommutative analogue of the following inequality \cite{BDG}: Let $\Phi$ be an Orlicz function
with $1<p_{\Phi}\leq q_{\Phi}<\infty.$ If $f=(f_{n})_{n\ge1}$ is a $L_{\Phi}$-bounded martingale, then
\begin{equation}\label{eq:PhiBG}
\int_{\Omega}\Phi \Big [ \Big ( \sum_{n = 1}^{\infty}|d f_n |^{2} \Big )^{\frac{1}{2}} \Big ] d P \approx
\sup_{n\ge1}\int_{\Omega}\Phi(|f_{n}|)dP,
\end{equation}
where $d f = ( d f_n )_{n \geq 1}$ is the martingale difference of $f$ and $``\approx"$ depends only on $\Phi.$ Notice that \eqref{eq:PhiBG} is the well-known Burkholder-Gundy inequality for convex powers $\Phi (t) = t^p$ (see \cite{BG}). In their remarkable paper \cite{PX1997}, Pisier and Xu proved the noncommutative analogue of the Burkholder-Gundy inequality, which triggered a systematic research of noncommutative martingale inequalities. We refer to a recent book by Xu \cite{Xu2008} for an up-to-date exposition of theory of noncommutative martingales. Evidently, the noncommutative $\Phi$-moment Burkholder-Gundy inequality implies those for $L_{\Phi}$ norms, which were already known as particular cases of more general ones established by the first named author in \cite{B2006}.

In this paper, we continue this line of investigation. We will introduce noncommutative weak Orlicz spaces and prove the associated martingale inequalities. In particular, we will prove that noncommutative weak Orlicz spaces can be renormed as Banach spaces under a mild condition of $\Phi,$ and a weak type version of the $\Phi$-moment inequalities for noncommutative martingles obtained recently by the first two named authors \cite{BC}. To the best of our knowledge, this kind of weak type $\Phi$-moment inequalities is new even in the commutative setting.

In \cite{LHW}, the authors prove the Burkholder-Gundy inequality for weak Orlicz spaces, using the arguments of stopping times and  good-$\lambda$ inequalities developed by Burkholder {\it et al} \cite{Burk}. However, the concepts of stopping times and  good-$\lambda$ inequalities are, up to now, not well defined in the generic noncommutative setting (there are some works on this topic, see \cite{AC} and references therein). Instead, interpolation and noncommutative Khinchine inequalities play crucial roles in the proof of the noncommutative Burkholder-Gundy inequality mentioned above. Then, in order to prove the weak type $\Phi$-moment Burkholder-Gundy inequality in the noncommutative setting, we need to prove the associated Khinchine type inequality. There are extensive works on various generalizations of the noncommutative Khinchine inequality in $L_p$-setting \cite{LP1986, LPP}, for instance, see \cite{P2009} and references therein. Unfortunately, our weak type $\Phi$-moment Khinchine inequality can not be obtained directly from ones established previously. By adapting natural and classical techniques in \cite{LP1986, LPP, LPX, MS}, we obtain the required one. This is the key point of this paper.

The remainder of this paper is organized as follows. In Section \ref{Prelimi} we present some preliminaries and notation on the noncommutative weak $L_p$ and Orlicz spaces. Noncommutative weak Orlicz spaces are presented in Section \ref{NcWeakOrliczSpa}. In Section \ref{Interp} we establish a
Marcinkiewicz-type interpolation theorem for noncommutative weak Orlicz spaces and prove that noncommutative weak Orlicz spaces
can be renormed as Banach spaces when $\Phi$ satisfies a mild condition. Finally, in Section \ref{MartingaleInequa}, we will prove the weak type $\Phi$-moment Burkholder-Gundy inequality for noncommutative martingales through establishing a weak type $\Phi$-moment Khintchine's inequality for Rademacher's random variables. The style of proof follows mainly the arguments in \cite{BC}.

In what follows, $C$ always denotes a constant, which may be different in different places. For two nonnegative (possibly infinite) quantities $X$ and $Y,$ by $X \lesssim Y$ we mean that there exists a constant $C>0$ such that $ X \leq C Y,$ and by $X \thickapprox Y$ that $X \lesssim Y$ and $Y \lesssim X.$

\section{Preliminaries}\label{Prelimi}

\subsection{Noncommutative weak $L_p$ spaces}\label{NcWeakLpSpa}

We use standard notation and notions from theory of noncommutative $L_{p}$-spaces. Our main references are \cite{PX2003}
and \cite{Xu2008} (see also \cite{PX2003} for more  historical references). Let $\M$ be a semifinite von Neumann algebra acting on a Hilbert space
$\mathbb{H}$ with a normal semifinite faithful trace $\tau.$ For $0<p<\infty$ let $L_{p}(\M)$ denote the noncommutative $L_p$ space with respect to $(\M, \tau).$
As usual, we set $L_{\infty}(\M,\tau)=\M$ equipped with the operator norm. Also, let $L_{0}(\mathcal{M})$ denote the topological $*$-algebra of
measurable operators with respect to $(\M, \tau).$

For $x\in L_{0}(\M)$ we define
\be
\lambda_{s}(x)=\tau(e^{\perp}_s (|x|))\;(s>0)\; \; \text{and}\; \; \mu_t (x) = \inf \{ s>0:\;
\lambda_s (x) \le t \}\; (t >0),
\ee
where $e_s^{\perp} (|x|) = e_{(s,\infty)}(|x|)$ is the spectral projection of $|x|$ associated with the interval $(s,\infty).$ The function $s\mapsto\lambda_{s}(x)$ is called the distribution function of $x$ and $\mu_{t}(x)$ the generalized singular number of $x.$ We will denote simply by $\lambda (x)$ and  $\mu(x)$ the two functions $s \mapsto \lambda_s (x)$ and $t \mapsto \mu_t (x),$ respectively. It is easy to check that both functions $\lambda (x)$ and $\mu(x)$ are decreasing and continuous from the right on $(0,\infty).$ For further information we refer the reader to \cite{FK}.

For $0<p<\infty,$ we have the following Kolmogorov inequality
\beq\label{eq:kol}
\lambda_{s}(x)\leq\frac{\|x\|_{p}^{p}}{s^{p}},\quad \forall s >0,
\eeq
for any $x\in L_{p}(\M).$ If $x,y$ in $L_0(\M)$, then
\beq\label{eq:tri}
\lambda_{2s}(x+y)\leq \lambda_{s/2}(x)+\lambda_{s/2}(y),\quad \forall s >0.
\eeq
We will frequently use these two inequalities in the sequel.


For $0< p < \8,$ the noncommutative weak $L_p$ space $L^w_p(\mathcal{M})$ is defined as the space
of all measurable operator $x$ such that
\be\begin{split}
\|x\|_{L^w_p}: = \sup_{t > 0} t^{\frac{1}{p}}\mu_{t}(x) < \8.
\end{split}\ee
Equipped with $\|.\|_{L^w_p},$ $L^w_p(\mathcal{M})$ is a quasi-Banach space. However, for $p>1$
$L^w_p(\mathcal{M})$ can be renormed as a Banach space by
\be
x \mapsto \sup_{t >0} t^{-1+ \frac{1}{p}} \int_{0}^{t}\mu_{s}(x)d s.
\ee
On the other hand, the quasi-norm admits the following useful description
\beq\label{eq:WeakLpNormMu}
\|x\|_{L^w_p} = \inf \big \{c>0:\; t ( \mu_t (x)/c )^p \le 1,\; \forall t >0 \big \}.
\eeq
Also, we have a description in terms of distribution function as following
\beq\label{eq:WeakLpNormLmbda}
\|x\|_{L^w_p} = \sup_{s > 0} s \lambda_s (x)^{\frac{1}{p}}.
\eeq

Recall that noncommutative weak $L_p$ spaces can be presented through noncommutative Lorenz spaces, for details see Dodds {\it et al} \cite{DDP1989} and Xu \cite{Xu1991}.

\subsection{Noncommutative Orlicz spaces}\label{NcOrliczSpa}

Recall that noncommutative Orlicz spaces were respectively defined by Kunze \cite{Kunze1990} in an algebraic way
(see also \cite{ARZ} for more general cases) and by Dodds {\it et al} \cite{DDP1989} and by Xu \cite{Xu1991} employing
 Banach space theory. The second approach based on the concept of Banach function spaces, among other properties,
readily indicates similarities with the classical origins. We will take the second approach.

Let $\Phi$  be an Orlicz function on $[0,\infty),$ i.e., a continuous increasing and convex function satisfying
$\Phi(0)=0$ and $\lim_{t\rightarrow
\infty}\Phi(t)=\infty.$ Recall that $\Phi$ is said to satisfy the
$\triangle_2$-condition if there is a constant $C$ such that $\Phi(2t)\leq C\Phi(t)$ for all $t>0.$ In this case,
we denote by $\Phi \in \Delta_2.$
It is easy to check that $\Phi \in \triangle_2$ if and only if for any $a > 0$ there is a constant $C_a>0$ such that
 $\Phi(a t)\leq C_a \Phi(t)$ for all $t>0.$

We will work with some standard indices associated to an Orlicz function. Given an Orlicz function $\Phi.$
Since $\Phi$ is convex, $\Phi' (t)$ is defined for each $t > 0$ except for a countable set of points in which we take
$\Phi'(t)$ as the derivative from the right. Then, we define
\be
a_{\Phi} = \inf_{t>0} \frac{t \Phi'(t)}{\Phi (t)}\quad \text{and} \quad b_{\Phi} = \sup_{t>0} \frac{t \Phi'(t)}{\Phi (t)}.
\ee
\begin{enumerate}[{\rm (1)}]

\item $1 \le a_{\Phi} \le  b_{\Phi} \le \8.$

\item The following characterizations of $a_{\Phi}$ and $b_{\Phi}$ hold:
\be\begin{split}
a_{\Phi} & = \sup \Big \{ p >0:\; t^{-p} \Phi (t)\; \text{is non-decreasing for all}\; t >0 \Big \};\\
b_{\Phi} & = \inf \Big \{ q >0:\; t^{-q} \Phi (t)\; \text{is non-increasing for all}\; t >0 \Big \}.
\end{split}\ee


\item $\Phi \in \triangle_2$  if and only if $b_{\Phi}< \8.$

\end{enumerate}
See \cite{M1985, M1989} for more information on Orlicz functions and Orlicz spaces.

For an Orlicz function $\Phi,$ the noncommutative Orlicz space
$L_{\Phi}(\M)$ is defined as the space of all measurable operators $x$ with
respect to $(\M,\tau)$ such that
$$\tau \Big ( \Phi \Big ( \frac{|x|}{c} \Big ) \Big )<\infty$$
for some $c>0.$ The space $L_{\Phi}(\M),$ equipped with the norm
\be
\|x\|_{\Phi}= \inf \big \{c>0: \;\tau \big ( \Phi({|x|}/{c}) \big )<1 \big \},
\ee
is a Banach space. If $\Phi(t)=t^p$ with $1 \leq p<\infty$ then
$L_\Phi(\M)= L_p(\M).$ Noncommutative Orlicz spaces are symmetric spaces of measurable operators as defined in
\cite{DDP1989, Xu1991}.


\section{Noncommutative weak Orlicz spaces}\label{NcWeakOrliczSpa}

In the sequel, unless otherwise specified, we always denote by $\Phi$ an Orlicz function.
Motivated by \eqref{eq:WeakLpNormMu}, we give the following definition

\begin{definition}\label{df:WeakOrliczSpa}
For an Orlicz function $\Phi,$ define
\be
L_\Phi^w (\mathcal{M}) = \big \{ x \in L_{0}(\mathcal{M}):\; \exists\; c>0 \;\text{such that}\; \sup_{t > 0}
t \Phi( \mu_t (x)/c ) < \8 \big \},
\ee
equipped with
\be
\|x\|_{L_\Phi^w} = \inf \big \{ c>0:\; t \Phi( \mu_t (x)/c ) \leq 1, \forall t>0 \big \}.
\ee
$L_\Phi^w (\mathcal{M})$ is said to be a noncommutative weak Orlicz space.
\end{definition}

\begin{remark}\label{rk:wLp}
\begin{enumerate}[\rm (1)]

\item It is easy to check that
\be
\|x\|_{L_\Phi^w} = \inf \Big \{ c>0:\; \frac{1}{\Phi^{-1}(\frac{1}{t})} \mu_t (x)/c  \leq 1, \forall t>0 \Big \}.
\ee

\item For $0 < p < \8,$ if $\Phi(t)=t^p$ then $L_\Phi^{w}(\mathcal{M})$ is the noncommutative weak $L_p$-space
as shown in \eqref{eq:WeakLpNormMu}.

\item Note that the noncommutative Orlicz space $L_{\Phi} (\M)$ has the following description:
\be
L_{\Phi} (\M) = \Big \{ x \in L_{0}(\mathcal{M}):\; \exists
\:c>0, \; \int^{\8}_0 \Big [  t \Phi \Big ( \frac{\mu_t (x)}{c} \Big )\Big ]\frac{d t}{t}  \leq 1 \Big \}
\ee
with the norm
\be
\|x\|_{L_\Phi} = \inf \Big \{ c>0:\; \int^{\8}_0 \Big [  t \Phi \Big ( \frac{\mu_t (x)}{c} \Big ) \Big ] \frac{d t}{t} \leq 1 \Big \}.
\ee
This shows that $L_\Phi^w (\mathcal{M})$ has a close connection with $L_{\Phi} (\M).$
\end{enumerate}

\end{remark}

We have the following useful characterization of $L^w_{\Phi} (\M).$

\begin{proposition}\label{prop:NcWeakOrliczcharac}
Let $\Phi$ be an Orlicz function. For any $c>0$ we have
\beq\label{eq:DistrSingularEquiv}
\sup_{t >0} t \Phi (\mu_t (x)/c) =\sup_{s > 0} \lambda_s (x) \Phi (s /c),\;\forall x \in L_0 (\M).
\eeq
Consequently,
\be
L_\Phi^w (\mathcal{M}) = \big \{ x \in L_{0}(\mathcal{M}):\; \exists\; c>0 \;\text{such that}\; \sup_{s > 0}
 \lambda_s (x) \Phi( s /c ) < \8 \big \},
\ee
and
\be
\|x\|_{L_\Phi^w} = \inf \big \{ c>0:\; \lambda_s (x) \Phi( s /c ) \leq 1, \forall s > 0 \big \}.
\ee
\end{proposition}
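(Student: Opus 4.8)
The plan is to establish the pointwise identity \eqref{eq:DistrSingularEquiv} for each fixed $c>0$ directly from the order-theoretic duality between the distribution function $\lambda(x)$ and the generalized singular number $\mu(x)$, and then to read off the two displayed consequences by taking suprema and infima over $c>0$. Throughout write $A(c):=\sup_{t>0}t\,\Phi(\mu_t(x)/c)$ and $B(c):=\sup_{s>0}\lambda_s(x)\,\Phi(s/c)$, and recall that both $\lambda(x)$ and $\mu(x)$ are non-increasing and right-continuous on $(0,\infty)$, while $\Phi$ is continuous and increasing with $\Phi(0)=0$.

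To prove $A(c)\le B(c)$, fix $t>0$; the corresponding term is $0$ when $\mu_t(x)=0$, so assume $\mu_t(x)>0$. Since $s'\mapsto\lambda_{s'}(x)$ is non-increasing, the set $\{s'>0:\lambda_{s'}(x)\le t\}$ is an up-set (a half-line) whose infimum is, by definition, $\mu_t(x)$; hence $\lambda_s(x)>t$ for every $0<s<\mu_t(x)$. Therefore $\lambda_s(x)\,\Phi(s/c)\ge t\,\Phi(s/c)$ for such $s$, and letting $s\uparrow\mu_t(x)$ and using continuity of $\Phi$ gives $B(c)\ge t\,\Phi(\mu_t(x)/c)$; taking the supremum over $t$ yields $B(c)\ge A(c)$. (When $\mu_t(x)=\infty$ one has $\lambda_s(x)>t$ for all $s>0$, whence $B(c)\ge\sup_{s>0}t\,\Phi(s/c)=\infty$, so the inequality persists.)

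For the reverse inequality $A(c)\ge B(c)$, fix $s>0$; the term vanishes when $\lambda_s(x)=0$, so assume $\lambda_s(x)>0$. The dual observation is that for every $0<t<\lambda_s(x)$ and every $0<s'<s$ we have $\lambda_{s'}(x)\ge\lambda_s(x)>t$, so $s'\notin\{s'':\lambda_{s''}(x)\le t\}$ and consequently $\mu_t(x)\ge s$. Monotonicity of $\Phi$ then gives $t\,\Phi(\mu_t(x)/c)\ge t\,\Phi(s/c)$, and letting $t\uparrow\lambda_s(x)$ gives $A(c)\ge\lambda_s(x)\,\Phi(s/c)$; the supremum over $s$ yields $A(c)\ge B(c)$, proving \eqref{eq:DistrSingularEquiv}. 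The two remaining assertions are then immediate: since $A(c)=B(c)$ for every $c>0$, the condition "$A(c)<\infty$ for some $c$" is equivalent to "$B(c)<\infty$ for some $c$", which is the claimed description of $L_\Phi^w(\M)$; and $\{c>0:A(c)\le 1\}=\{c>0:B(c)\le 1\}$, so the two infima defining $\|x\|_{L_\Phi^w}$ coincide.

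I expect no serious obstacle here. The only points needing care are the order-theoretic step—making precise that the sub-level set of the non-increasing function $\lambda(x)$ is a half-line with left endpoint exactly $\mu_t(x)$, which is essentially the defining relation between $\mu$ and $\lambda$—and the degenerate cases $\mu_t(x)\in\{0,\infty\}$ and $\lambda_s(x)\in\{0,\infty\}$ (and, if one is fussy, the possibility that $\Phi$ vanishes near $0$, which only forces the relevant terms to be zero). Conceptually the identity merely says that maximizing "$t$ against $s=\mu_t(x)$" and maximizing "$s$ against $t=\lambda_s(x)$" give the same value, since the graphs of $\mu(x)$ and $\lambda(x)$ are reflections of one another across the diagonal.
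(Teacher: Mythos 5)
Your proof is correct, and it takes a genuinely different route from the paper's. The paper first observes $\lambda_s(x)=\lambda_{\mu(x)}(s)$ to reduce \eqref{eq:DistrSingularEquiv} to the corresponding identity for nonnegative measurable functions on $(0,\infty)$, then verifies that identity by an ``immediate computation'' for simple functions, and finally passes to general functions by monotone approximation from below. You instead argue directly from the defining pseudo-inverse relation between $\lambda(x)$ and $\mu(x)$: the two inequalities $A(c)\le B(c)$ and $A(c)\ge B(c)$ each follow from the observations that $s<\mu_t(x)$ forces $\lambda_s(x)>t$ and that $t<\lambda_s(x)$ forces $\mu_t(x)\ge s$, together with continuity and monotonicity of $\Phi$. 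This is more self-contained and more elementary: it sidesteps the reduction to the function setting and does away with the simple-function verification and the approximation step, both of which the paper leaves to the reader. The paper's route has the modest advantage of reducing at once to the classical rearrangement setting, where such formulas are folklore, but for this particular statement your direct argument is cleaner. The degenerate cases you flag ($\mu_t(x)=\infty$, $\lambda_s(x)=\infty$, $\Phi$ vanishing on an initial interval) are handled correctly, with the usual convention $\infty\cdot 0=0$, and the final two assertions of the proposition do indeed follow immediately from $A(c)=B(c)$ for all $c>0$.
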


\begin{proof}
Since $\lambda_s (x) = \lambda_{\mu(x)} (s),$ where $\lambda_{\mu(x)}$ is the distribution function of the function
$t \to \mu_t (x)$ with respect the Lebesgue measure in $[0, \8 ),$ it reduces to prove that
\beq\label{eq:DistrSingularFunctEquiv}
\sup_{t >0} t \Phi (f^* (t)/c) =\sup_{s > 0} \lambda_f (s) \Phi (s /c),
\eeq
for any nonnegative measurable function $f$ on $(0,\8),$ where $\lambda_f$ is the distribution function of
 $f$ with respect to the Lebesgue measure on $[0, \8)$ and $f^*$ is the rearrangement function of $f$ defined by
\be
f^* (t) = \inf \{s>0:\; \lambda_f (s) \le t \}.
\ee
To this end, we consider a simple function $f = \sum_k a_k \chi_{A_k},$ where $a_k > 0$ and $A_k$ are
measurable subsets of $[0,\8)$ such that $|A_k| < \8$ and $A_k \cap A_j = \emptyset$ whenever $k \not= j.$
An immediate computation yields \eqref{eq:DistrSingularFunctEquiv} holds for such a function.
Since a nonnegative measurable function can be approximated almost everywhere by a sequence of
nonnegative simple functions from below, a standard argument concludes \eqref{eq:DistrSingularFunctEquiv}
 for any nonnegative measurable function.
\end{proof}

We collect some basic properties of noncommutative Orlicz spaces as follows.

\begin{proposition}\label{prop:ineq}
Let $\Phi$ be an Orlicz function.

\begin{enumerate}[\rm (1)]

\item If $\|x\|_{L_\Phi^{w}} > 0$ then
\be
\sup_{t>0} t \Phi \big ( \mu_t (x)/\|x\|_{L_\Phi^{w}} \big ) \leq 1\; \text{and}\; \sup_{s >0}
\lambda_s (x) \Phi \big ( s / \|x\|_{L_\Phi^{w}} \big ) \leq 1.
\ee

\item $\|\cdot\|_{L_\Phi^{w}}$ is a quasi-norm on $L_\Phi^{w}(\mathcal{M}).$ In particular,
\beq\label{eq:QuasiNormInequa}
\| x + y \|_{L_\Phi^{w}} \le 2 ( \|x\|_{L_\Phi^{w}} + \|y\|_{L_\Phi^{w}}),\; \forall x, y \in L_\Phi^{w}(\mathcal{M}).
\eeq

\item If $\|x\|_{L_\Phi^{w}}\leq 1,$ then
\be
\sup_{t>0} t \Phi(\mu_t (x)) \leq \|x\|_{L_\Phi^{w}}\; \text{and}\; \sup_{s >0} \lambda_s (x) \Phi( s )
\leq \|x\|_{L_\Phi^{w}}.
\ee

\item $\| x\|_{L^w_{\Phi}} \le \| x \|_{L_{\Phi}}$ for any $x \in L_\Phi(\mathcal{M}).$
Consequently, $L_\Phi(\mathcal{M})\subset L_\Phi^{w}(\mathcal{M}).$

\end{enumerate}

\end{proposition}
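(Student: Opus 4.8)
All four statements follow quickly from the definition together with Proposition~\ref{prop:NcWeakOrliczcharac}, the elementary convexity facts about $\Phi$, and the standard rearrangement calculus for $\mu_t$. I would prove (1) and (3) on the singular-number side and then read off the distribution-function versions from \eqref{eq:DistrSingularEquiv}. For (1), the plan is a direct limiting argument: choose $c_n \downarrow \|x\|_{L_\Phi^w}$ with $t\Phi(\mu_t(x)/c_n)\le 1$ for all $t>0$; fixing $t$ and using that $\mu_t(x)/c_n$ increases to $\mu_t(x)/\|x\|_{L_\Phi^w}$ together with continuity of $\Phi$, pass to the limit and then take the supremum over $t$. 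For (3), when $a:=\|x\|_{L_\Phi^w}\le 1$ one pulls the factor $1/a\ge 1$ out of $\Phi$ via the convexity inequality $\Phi(\lambda u)\ge\lambda\Phi(u)$ for $\lambda\ge1$ (valid since $\Phi(0)=0$), so that $t\Phi(\mu_t(x))\le a\,t\Phi(\mu_t(x)/a)\le a$ by (1); the case $a=0$ is trivial since then $\mu_t(x)\equiv 0$.

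For (2), positive homogeneity is immediate from $\mu_t(\alpha x)=|\alpha|\mu_t(x)$, and $\|x\|_{L_\Phi^w}=0$ forces $\mu_t(x)=0$ for every $t$ (otherwise letting $c\to 0^+$ in $t\Phi(\mu_t(x)/c)\le1$ blows up the left-hand side), hence $x=0$. The one genuinely substantive point is the quasi-triangle inequality \eqref{eq:QuasiNormInequa} \emph{with the constant $2$}. My plan is to rewrite the quasi-norm as $\|z\|_{L_\Phi^w}=\sup_{t>0}\mu_t(z)/\Phi^{-1}(1/t)$ (Remark~\ref{rk:wLp}(1)), to invoke the submajorization bound $\mu_{2t}(x+y)\le\mu_t(x)+\mu_t(y)$ from \cite{FK}, and to use that $\Phi^{-1}$, being concave with $\Phi^{-1}(0)=0$, is subadditive, so $\Phi^{-1}(1/t)\le 2\,\Phi^{-1}(1/(2t))$; combining these and taking the supremum over $t$ gives exactly $\|x+y\|_{L_\Phi^w}\le 2(\|x\|_{L_\Phi^w}+\|y\|_{L_\Phi^w})$. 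I expect this to be the only place requiring care: using the cruder distribution-function inequality \eqref{eq:tri} instead would yield a larger constant, so the point is precisely to pair the sharp $\mu_{2t}$-submajorization with the subadditivity of $\Phi^{-1}$.

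For (4), I would use the identity $\tau(\Phi(|x|/c))=\int_0^\infty\Phi(\mu_t(x)/c)\,dt$, so that every $c>\|x\|_{L_\Phi}$ makes this integral at most $1$; then, since $t\mapsto\mu_t(x)$ is non-increasing and $\Phi$ is increasing, for each fixed $s>0$ one has $s\,\Phi(\mu_s(x)/c)=\int_0^s\Phi(\mu_s(x)/c)\,dt\le\int_0^s\Phi(\mu_t(x)/c)\,dt\le 1$, whence $\|x\|_{L_\Phi^w}\le c$; letting $c\downarrow\|x\|_{L_\Phi}$ gives $\|x\|_{L_\Phi^w}\le\|x\|_{L_\Phi}$ and therefore the inclusion $L_\Phi(\M)\subset L_\Phi^w(\M)$. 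Nothing here is deep; the main obstacle is simply the bookkeeping of the constant in (2).
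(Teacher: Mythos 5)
Your proof is correct. Parts (1), (3), and (4) follow essentially the same route as the paper: the limiting argument along $c_k\downarrow\|x\|_{L_\Phi^w}$ for (1), pulling a scalar $\le 1$ out of $\Phi$ via convexity for (3), and the Fack--Kosaki identity $\tau(\Phi(|x|/c))=\int_0^\infty\Phi(\mu_t(x)/c)\,dt$ together with the monotonicity of $t\mapsto\mu_t(x)$ for (4). The genuine divergence is in the quasi-triangle inequality of (2). The paper works directly from the definition: starting from $\mu_t(x+y)\le\mu_{t/2}(x)+\mu_{t/2}(y)$, it bounds $t\Phi\bigl(\mu_t(x+y)/(2(a+b))\bigr)$ by applying convexity of $\Phi$ twice --- once for the midpoint, $\Phi((u+v)/2)\le\tfrac12\Phi(u)+\tfrac12\Phi(v)$, and once as $\Phi(\lambda u)\le\lambda\Phi(u)$ for $\lambda=a/(a+b)$ and $\lambda=b/(a+b)$ --- so the final bound is $\tfrac{a}{a+b}+\tfrac{b}{a+b}=1$. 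You instead pass to the reformulated quasi-norm $\sup_{t>0}\mu_t(z)/\Phi^{-1}(1/t)$ of Remark~\ref{rk:wLp}(1), use the same sharp submajorization in the form $\mu_{2t}(x+y)\le\mu_t(x)+\mu_t(y)$, and pair it with subadditivity of the concave function $\Phi^{-1}$, namely $\Phi^{-1}(1/t)\le 2\,\Phi^{-1}(1/(2t))$. These are dual manifestations of the same convexity fact, and both yield the constant $2$; your version needs only one convexity-type input and is arguably a bit slicker. Your remark that the sharp $\mu_{2t}$-submajorization (rather than the cruder \eqref{eq:tri}) is what secures the constant $2$ is exactly right --- the paper's computation rests on the same sharp form. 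One microscopic omission shared with the paper: the case $a=b=0$ in (2) needs to be dispatched separately (trivially, since then $x=y=0$).
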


\begin{proof} (1)\; By the definition of $\|x\|_{L_\Phi^{w}},$
there is a sequence $\{c_k\}\subset \mathbb{R}^{+}$ such that $c_k\downarrow
\|x\|_{L_\Phi^{w}}$ and $t \Phi \big ( \mu_t (x)/ c_k \big ) \leq 1$ for all $t>0.$
Since $\Phi$ is continuous, taking $k \rightarrow \infty$ we obtain the first inequality.
The second inequality follows from \eqref{eq:DistrSingularEquiv} and the first one.

(2)\;If $\|x\|_{L_\Phi^{w}}=0,$ then there is a sequence $\{c_k\} \subset \mathbb{R}^{+}$ such that
 $c_k\downarrow 0$ and $t \Phi(\mu_t (x)/c_k) \leq 1, \forall t>0.$ Since $\Phi (t) \to \8$ as $t \to \8,$
  it is concluded that $\mu_t (x)=0,\;\forall t>0,$ which implies $x=0$ from the fact that
  $\lim_{t \to 0^+} \mu_t (x) = \|x\|.$

It is clear that $\|\alpha x\|_{L_\Phi^{w}}=|\alpha|\|x\|_{L_\Phi^{w}}.$ To prove the
generalized triangle inequality, we let $x,y\in L_\Phi^{w}(\mathcal{M})$ and
$\|x\|_{L_\Phi^{w}}=a,\;\|y\|_{L_\Phi^{w}}=b$ with $a, b>0.$ By (1), we have
\be\begin{split}
t \Phi \Big ( \frac{\mu_t (x + y)}{2(a+b)} \Big )& \leq
t \Phi \Big ( \frac{\mu_{t/2} (x) + \mu_{t/2} (y)}{2(a+b)} \Big )\\
& \le \frac{t}{2} \Phi \Big ( \frac{\mu_{t/2} (x)}{a+b} \Big ) + \frac{t}{2} \Phi \Big ( \frac{\mu_{t/2} (y)}{a+b} \Big )\\
& \leq \frac{a}{a + b} \frac{t}{2}\Phi \Big ( \frac{\mu_{t/2} (x)}{a} \Big ) + \frac{b}{a + b} \frac{t}{2}\Phi \Big
( \frac{\mu_{t/2} (y)}{b} \Big ) \leq 1.
\end{split}\ee
Hence, $ \|x+y\|_{L_\Phi^{w}}\leq 2(a+b)= 2 ( \|x\|_{L_\Phi^{w}}+\|y\|_{L_\Phi^{w}}).$

(3)\; If $\|x\|_{L_\Phi^{w}} =0,$ by (2) the inequality holds. Suppose $\|x\|_{L_\Phi^{w}}=a \leq 1$ and $a \not=0.$
By (1) we have that $t \Phi(\mu_t (x)/a) \leq 1, \forall t>0.$ From the convexity of $\Phi$ and the fact $\Phi(0)=0,$
we have $\Phi(at) \leq a \Phi(t), \forall t>0,$ which implies that
\be
\frac{t}{a}\Phi(\mu_t (x)) \leq t \Phi \big ( \mu_t (x)/a \big ) \leq 1, \;\forall t>0.
\ee
This gives the first inequality. The second inequality follows from \eqref{eq:DistrSingularEquiv} and the first one.

(4)\; Let $x \in L_{\Phi}(\mathcal{M}),\;x \neq 0.$ Then, for any $t>0,$
\be\begin{split}
t \Phi \Big ( \frac{\mu_t (x)}{\|x\|_{L_{\Phi}}} \Big ) \le \int^{t}_0
\Phi \Big ( \frac{\mu_s (x)}{\|x\|_{L_{\Phi}}} \Big ) d s \le \int^{\8}_0
\Phi \Big ( \frac{\mu_s (x)}{\|x\|_{L_{\Phi}}} \Big ) d s \leq 1.
\end{split}\ee
Hence, $\|x\|_{L_\Phi^{w}}\leq \|x\|_{L_{\Phi}}$ and $L_{\Phi}(\mathcal{M})\subset L_\Phi^{w}(\mathcal{M}).$
\end{proof}

Recall that for measurable operators $x_n, x$ with respect to $(\M,\tau),$ $x_n$ converges to $x$ in measure
if and only if $\lim_n \mu_t (x_n -x) =0$ for all $t >0.$ Then, we have

\begin{proposition}\label{prop:conv}
Let $\Phi$ be an Orlicz function.

\begin{enumerate}[\rm (1)]

\item If $\|x_n-x\|_{L_\Phi^{w}}\rightarrow 0,$ then $x_n\rightarrow x$ in measure.

\item $L_\Phi^{w}(\mathcal{M})$ is a quasi-Banach space.

\end{enumerate}

\end{proposition}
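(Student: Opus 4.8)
For part (1), the plan is to exploit Proposition~\ref{prop:ineq}(3). Suppose $\|x_n - x\|_{L_\Phi^w} \to 0$; passing to the tail we may assume $\|x_n - x\|_{L_\Phi^w} \le 1$ for all large $n$, so that $\sup_{t>0} t\,\Phi(\mu_t(x_n - x)) \le \|x_n - x\|_{L_\Phi^w}$. Fix $t_0 > 0$. For every $t \in (0, t_0]$ monotonicity of $\mu$ gives $t\,\Phi(\mu_{t_0}(x_n-x)) \le t\,\Phi(\mu_t(x_n-x)) \le \|x_n - x\|_{L_\Phi^w}$; taking $t = t_0$ yields $\Phi(\mu_{t_0}(x_n - x)) \le \|x_n - x\|_{L_\Phi^w}/t_0 \to 0$. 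Since $\Phi$ is an Orlicz function with $\Phi(0) = 0$ and $\Phi$ is continuous and increasing, $\Phi(s) \to 0$ as $s \to 0^+$ and $\Phi$ is strictly positive away from $0$; hence $\mu_{t_0}(x_n - x) \to 0$. As $t_0 > 0$ was arbitrary, $x_n \to x$ in measure.

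For part (2), I would argue that $L_\Phi^w(\M)$ is complete with respect to the quasi-norm $\|\cdot\|_{L_\Phi^w}$. By Proposition~\ref{prop:ineq}(2) it is already a quasi-normed space, so it suffices to show every $\|\cdot\|_{L_\Phi^w}$-Cauchy sequence converges. Let $(x_n)$ be Cauchy; by part (1) it is Cauchy in measure, and since $L_0(\M)$ is complete in measure (a standard fact about measurable operators, see \cite{FK}), there is $x \in L_0(\M)$ with $x_n \to x$ in measure. The remaining task is to show $x \in L_\Phi^w(\M)$ and $\|x_n - x\|_{L_\Phi^w} \to 0$. Fix $\e > 0$ and choose $N$ so that $\|x_n - x_m\|_{L_\Phi^w} \le \e$ for $n, m \ge N$, which by Proposition~\ref{prop:ineq}(1) gives $\sup_{s>0}\lambda_s(x_n - x_m)\,\Phi(s/\e) \le 1$, i.e. $\lambda_s(x_n - x_m)\,\Phi(s/\e) \le 1$ for all $s > 0$. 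Now fix $m \ge N$ and $s > 0$ and let $n \to \infty$: since $x_n - x_m \to x - x_m$ in measure, the distribution functions satisfy $\lambda_s(x - x_m) \le \liminf_n \lambda_{s'}(x_n - x_m)$ for any $s' < s$ (lower semicontinuity of $\lambda$ under convergence in measure, again from \cite{FK}), and letting $s' \uparrow s$ together with right-continuity of $\lambda_\cdot(x-x_m)$ yields $\lambda_s(x - x_m)\,\Phi(s/\e) \le 1$ for all $s > 0$. By Proposition~\ref{prop:NcWeakOrliczcharac} this means $\|x - x_m\|_{L_\Phi^w} \le \e$ for all $m \ge N$; in particular $x - x_N \in L_\Phi^w(\M)$, so $x = (x - x_N) + x_N \in L_\Phi^w(\M)$, and $\|x_m - x\|_{L_\Phi^w} \to 0$.

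The main obstacle is the passage to the limit in the distribution-function inequality: one needs a clean semicontinuity statement for $s \mapsto \lambda_s(\cdot)$ under convergence in measure, handled via the auxiliary parameter $s' < s$ and the right-continuity of $\lambda$ noted in Section~\ref{Prelimi}. Everything else — extracting the limit operator from completeness of $L_0(\M)$ in measure, and converting the $\lambda_s$-estimate back into a quasi-norm bound via Proposition~\ref{prop:NcWeakOrliczcharac} — is routine. A cosmetic alternative for part (2) is to work instead with $\mu_t$ and \eqref{eq:tri}, but the $\lambda_s$-formulation is the most economical since the needed inequalities are already recorded.
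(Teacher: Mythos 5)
Your proof is correct and takes essentially the paper's route: for part (1) pass to the limit in the defining inequality using $\Phi(t)\to\infty$, and for part (2) extract a limit $x$ in the measure topology and pass to the limit in the quasi-norm estimate for $x_n-x_m$ as $m\to\infty$, the only cosmetic difference being that you phrase the limiting step in terms of $\lambda_s$ where the paper uses $\mu_t$ (interchangeable by Proposition~\ref{prop:NcWeakOrliczcharac}). One small wording slip: after you obtain $\lambda_s(x-x_m)\le\liminf_n\lambda_{s'}(x_n-x_m)\le 1/\Phi(s'/\e)$ for $s'<s$, sending $s'\uparrow s$ needs only continuity of $\Phi$ (so that $1/\Phi(s'/\e)\to 1/\Phi(s/\e)$), not right-continuity of $\lambda_\cdot(x-x_m)$, which concerns the one-sided limit $s'\downarrow s$; right-continuity would instead be the relevant fact if one argued via $\lambda_{s'}(x-x_m)\le\liminf_n\lambda_s(x_n-x_m)$ for $s'>s$ and then let $s'\downarrow s$, which is in effect the route the paper takes with $\mu_t$.
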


\begin{proof} (1)\; Suppose $\|x_n-x\|_{L_\Phi^{w}}\rightarrow 0.$ Then there is a sequence $(c_n)$ of positive numbers
with $\lim_n c_n =0$ such that
\be
t \Phi \Big ( \frac{\mu_t (x_n -x)}{c_n} \Big ) \le 1,\; \forall t >0.
\ee
for all $n.$ Since $\Phi (t) \to \8$ as $t \to \8,$ it is concluded that $\lim_n \mu_t (x_n -x) =0$ for any $t>0.$
Hence, $x_n\rightarrow x$ in measure.

(2)\; By Proposition \ref{prop:ineq} (2), it suffices to prove that $L_\Phi^{w}(\mathcal{M})$ is complete.
Suppose $x_n\in L_\Phi^{w}(\mathcal{M})$ such that $\lim_{m,n \rightarrow\infty} \| x_n - x_m \|_{L_\Phi^{w}}= 0.$
Then, for any $1>\varepsilon >0$ there is an $n_0$ such that $\|x_n-x_m\|_{L_\Phi^{w}}< \varepsilon$ for all
$n,m\geq n_0.$ Since $L_0 (\M)$ is complete in the topology of the convergence in measure, by (1) there exists
$x \in L_0 (\M)$ such that
$$
\lim_{n \rightarrow\infty}\mu_{t}(x_n-x)=0, \forall t>0.
$$
Clearly,
$$
x_{n}-x_{m} \rightarrow x_{n}-x \quad\mbox{in measure}$$
as $m\rightarrow\infty.$ By Proposition \ref{prop:ineq} (3), for any $n \ge n_0$ we have
\be\begin{split}
t \Phi \Big ( \frac{\mu_t (x_n-x)}{\varepsilon} \Big )\le \lim_{m \to \infty} t \Phi \Big ( \frac{\mu_t (x_n-x_m)}
{\varepsilon} \Big ) \leq \liminf_{m\to\infty} \Big \| \frac{x_n-x_m}{\varepsilon} \Big \|_{L_\Phi^{w}}\leq 1,
\end{split}\ee
for any $t > 0.$ This yields $\|x_{n}-x\|_{L_\Phi^{w}}< \varepsilon$ and so
$\lim_{n\rightarrow\infty}\|x_{n}-x\|_{L_\Phi^{w}}=0.$ Also, by \eqref{eq:QuasiNormInequa} we obtain that
 $x\in L_\Phi^{w}(\mathcal{M}).$ Hence, $L_\Phi^{w}(\mathcal{M})$ is complete.
\end{proof}

\begin{remark}\label{rk:NcWeakOrlicz}
Clearly, $L^w_{\Phi} (\M)$ is rearrangement invariant. Then, by Proposition \ref{prop:conv} (2) we have that
$L^w_{\Phi} (\M)$ is a symmetric quasi-Banach space of measurable operators as defined in \cite{Xu1991}.
\end{remark}










The following are two examples for illustrating noncommutative weak Orlicz spaces.

\begin{example}\label{ex:pleq}
Let $\Phi (t) = t^a \ln (1 + t^b)$ with $a > 1$ and $b >0.$ It is easy to check that $\Phi$ is an Orlicz function and
$p_{\Phi} = a$ and $q_{\Phi} = a + b.$ Then, $L^w_{\Phi}$ can not be coincide with any $L^w_p.$
\end{example}

\begin{example}\label{ex:p=q=2}
Let $\Phi (t) = t^p (1 + c \sin(p \ln t))$ with $p > 1/(1-2c)$ and $0< c <1/2.$ Then, $\Phi$ is an
Orlicz function and $p_{\Phi} = q_{\Phi} = p.$ It is clear that $\Phi$ is equivalent to $t^p$ and hence
 $L^w_{\Phi} = L^w_p.$
\end{example}


Let $a=(a_{n})$ be a finite sequence in $L^w_{\Phi}({\mathcal{M}}),$
we define
\be
\|a\|_{L^w_{\Phi}({\mathcal{M}},\el_{C}^{2})}= \Big \| \Big ( \sum_n
|a_{n}|^{2} \Big )^{1/2} \Big \|_{L^w_{\Phi}}\;
\text{and}\;
\|a\|_{L^w_{\Phi}({\mathcal{M}},\el_{R}^{2})} = \Big \| \Big ( \sum_{n\geq 0
} |a_{n}^{*}|^{2} \Big )^{1/2} \Big \|_{L^w_{\Phi}},
\ee
respectively. Then, we have

\begin{proposition}\label{prop:WeakOrliczNorm}
$\|\cdot \|_{L^w_{\Phi}({\mathcal{M}},\el_{C}^{2})}$ and $\|\cdot\|_{L^w_{\Phi}({\mathcal{M}},\el_{R}^{2})}$
 are two quasi-norms on the family of all finite sequences in
$L^w_{\Phi}({\mathcal{M}}).$
\end{proposition}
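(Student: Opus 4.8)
The plan is to verify the three defining properties of a quasi-norm for $\|\cdot\|_{L^w_{\Phi}(\M,\el_{C}^{2})}$; the argument for $\|\cdot\|_{L^w_{\Phi}(\M,\el_{R}^{2})}$ is entirely parallel, with columns replaced by rows. Positive definiteness and positive homogeneity are inherited directly from the fact, established in Proposition \ref{prop:ineq}(2), that $\|\cdot\|_{L^w_{\Phi}}$ is a quasi-norm on $L^w_{\Phi}(\M)$: if $\|a\|_{L^w_{\Phi}(\M,\el_{C}^{2})}=0$ then $(\sum_n|a_n|^2)^{1/2}=0$ in $L^w_{\Phi}(\M)$, hence $\sum_n|a_n|^2=0$ and, the summands being positive, $a_n=0$ for every $n$; and $\|\alpha a\|_{L^w_{\Phi}(\M,\el_{C}^{2})}=\big\|\,|\alpha|\,(\sum_n|a_n|^2)^{1/2}\big\|_{L^w_{\Phi}}=|\alpha|\,\|a\|_{L^w_{\Phi}(\M,\el_{C}^{2})}$ for every scalar $\alpha$.

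The substantive point is the quasi-triangle inequality, and the cleanest route is to transfer it to a larger semifinite von Neumann algebra. Put $\N=\M\,\overline{\otimes}\,B(\el^2)$ with trace $\tau\otimes\tr$ (for sequences of length $N$ one may instead use $M_N(\M)$), and to a finite sequence $a=(a_n)$ associate the column operator $c(a)=\sum_n a_n\otimes e_{n,1}\in\N$. A one-line computation gives $c(a)^*c(a)=(\sum_n|a_n|^2)\otimes e_{1,1}$, so $|c(a)|=(\sum_n|a_n|^2)^{1/2}\otimes e_{1,1}$; since $e_{1,1}$ is a one-dimensional projection, $e^{\perp}_s(|c(a)|)=e^{\perp}_s\big((\sum_n|a_n|^2)^{1/2}\big)\otimes e_{1,1}$ for $s>0$, so $\lambda_s(c(a))=\lambda_s\big((\sum_n|a_n|^2)^{1/2}\big)$, and hence $\mu_t(c(a))=\mu_t\big((\sum_n|a_n|^2)^{1/2}\big)$ for every $t>0$. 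Consequently
\[
\|c(a)\|_{L^w_{\Phi}(\N)}=\Big\|\Big(\sum_n|a_n|^2\Big)^{1/2}\Big\|_{L^w_{\Phi}(\M)}=\|a\|_{L^w_{\Phi}(\M,\el_{C}^{2})};
\]
in particular the left-hand side is finite, since each $a_n\otimes e_{n,1}$ has the same singular numbers as $a_n$ and so lies in $L^w_{\Phi}(\N)$, which is a vector space by Proposition \ref{prop:ineq}(2). Since $c$ is linear, $c(a+b)=c(a)+c(b)$, so \eqref{eq:QuasiNormInequa} applied inside $L^w_{\Phi}(\N)$ yields
\[
\|a+b\|_{L^w_{\Phi}(\M,\el_{C}^{2})}\le 2\big(\|c(a)\|_{L^w_{\Phi}(\N)}+\|c(b)\|_{L^w_{\Phi}(\N)}\big)=2\big(\|a\|_{L^w_{\Phi}(\M,\el_{C}^{2})}+\|b\|_{L^w_{\Phi}(\M,\el_{C}^{2})}\big),
\]
so $\|\cdot\|_{L^w_{\Phi}(\M,\el_{C}^{2})}$ is a quasi-norm with constant $2$.

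For the row functional one uses $r(a)=\sum_n a_n\otimes e_{1,n}$; then $r(a)r(a)^*=(\sum_n|a_n^*|^2)\otimes e_{1,1}$, so $|r(a)^*|=(\sum_n|a_n^*|^2)^{1/2}\otimes e_{1,1}$, and since $\mu_t(r(a))=\mu_t(r(a)^*)=\mu_t(|r(a)^*|)$ one gets $\|r(a)\|_{L^w_{\Phi}(\N)}=\|a\|_{L^w_{\Phi}(\M,\el_{R}^{2})}$, after which the three steps above repeat verbatim. The only point that requires a little care is the singular-number identity $\mu_t(x\otimes e_{1,1})=\mu_t(x)$ (together with the observation that Proposition \ref{prop:ineq} and \eqref{eq:QuasiNormInequa} apply to any semifinite von Neumann algebra, in particular to $\N$); once that is in place, the statement reduces entirely to properties already proved. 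As an alternative one can avoid the ambient algebra: from $|a_n+b_n|^2\le 2|a_n|^2+2|b_n|^2$, operator monotonicity of $t\mapsto t^{1/2}$, the subadditivity $\mu_{2t}(u+v)\le\mu_t(u)+\mu_t(v)$ of singular numbers, and the inequality $\Phi^{-1}(2u)\le 2\Phi^{-1}(u)$ coming from convexity of $\Phi$, one derives the quasi-triangle inequality directly, at the cost of a slightly larger constant.
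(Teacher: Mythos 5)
Your proof is correct and follows the same route as the paper: embed a finite sequence $a=(a_n)$ into $L^w_\Phi(\M\bar\otimes\B(\ell^2))$ as a column (resp.\ row) operator, check that the singular numbers of the embedded operator coincide with those of $(\sum_n|a_n|^2)^{1/2}$ (resp.\ $(\sum_n|a_n^*|^2)^{1/2}$), and then invoke linearity of the embedding together with Proposition~\ref{prop:ineq}(2) in the ambient algebra. The only difference is that you spell out the singular-number identity $\mu_t(x\otimes e_{1,1})=\mu_t(x)$ and the positive-definiteness and homogeneity checks, which the paper states implicitly; this is a welcome addition, not a divergence.
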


\begin{proof}
To see this, let us consider the von Neumann algebra tensor product ${\mathcal{M}}\bar{\otimes}{ \mathcal{B}}(\el^{2})$
 with the product trace $\tau \bar{\otimes} \mathrm{tr},$ where
${\mathcal{B}}(\el^{2}) $ is the algebra of all bounded operators on
$\el^{2}$ with the usual trace $\mathrm{tr}.$ $\tau\otimes \mathrm{tr}$ is a
semifinite normal faithful trace. The associated noncommutative weak
Orlicz space is denoted by $L^w_{\Phi}({\mathcal{M}}\bar{\otimes}
{\mathcal{B}}(\el^{2})).$ Now, any finite sequence $a=(a_{n})_{n\geq
0}$ in $L^w_{\Phi}({\mathcal{M}})$ can be regarded as an element in
$L^w_{\Phi}({\mathcal{M}}\bar{\otimes} {\mathcal{B}}(\el^{2}))$ via the
following map
\be
a\longmapsto T(a)= \left(
\begin{matrix}
a_{0} & 0 & \ldots \\
a_{1} & 0 & \ldots \\
\vdots & \vdots & \ddots
\end {matrix}
\right ),
\ee
that is, the matrix of $T(a)$ has all vanishing entries except those
in the first column which are the ${a_{n}}$'s. Such a matrix is
called a column matrix, and the closure in
$L^w_{\Phi}({\mathcal{M}}\bar{\otimes} {\mathcal{B}}(\el^{2}))$ of all column
matrices is called the column subspace of
$L^w_{\Phi}({\mathcal{M}}\bar{\otimes} {\mathcal{B}}(\el^{2})).$ Since
\be
\|a\|_{L^w_{\Phi}({\mathcal{M}},\el_{C}^{2})}=\||T(a)|\|_{L^w_{\Phi}({\mathcal{M}}\bar{\otimes}
{\mathcal{B}}(\el^{2}))}= \|T(a)\|_{L^w_{\Phi}({\mathcal{M}}\bar{\otimes} {\mathcal{B}}(\el^{2}))},
\ee
then $\|.\|_{L^w_{\Phi}({\mathcal{M}},\el_{C}^{2})}$ defines a quasi-norm
on the family of all finite sequences of $L^w_{\Phi}({\mathcal{M}}).$ Similarly, we can show that
 $\|.\|_{L^w_{\Phi}({\mathcal{M}},\el_{R}^{2})}$ defines a quasi-norm
on the family of all finite sequences of $L^w_{\Phi}({\mathcal{M}}).$
\end{proof}

We define $L^w_{\Phi}({\mathcal{M}},\el_{C}^{2})$ (resp. $L^w_{\Phi}({\mathcal{M}}, \el_{R}^{2})$) to be the completion of all finite sequences in $L^w_{\Phi} (\M)$ under the norm $\|\cdot \|_{L^w_{\Phi}({\mathcal{M}},\el_{C}^{2})}$ (resp. $\|\cdot\|_{L^w_{\Phi}({\mathcal{M}},\el_{R}^{2})}$). It is clear that a sequence
$a=(a_{n})_{n\geq 0}$ in $L^w_{\Phi}({\mathcal{M}})$ belongs to
$L^w_{\Phi}({\mathcal{M}}, \el_{C}^{2})$ (resp. $L^w_{\Phi}({\mathcal{M}}, \el_{R}^{2})$) if and only if
\be\begin{split}
\| a \|_{L^w_{\Phi}({\mathcal{M}},\el_{C}^{2})} & : = \sup_{n\geq 0} \Big \| \Big ( \sum_{k= 0
}^{n}|a_{k}|^{2} \Big )^{\frac{1}{2}} \Big \|_{\Phi}<\infty\;\\
\Big ( \mathrm{resp.}\;\|a\|_{L^w_{\Phi}({\mathcal{M}},\el_{R}^{2})} & := \sup_{n\geq 0} \Big \| \Big ( \sum_{k= 0
}^{n}|a^*_{k}|^{2} \Big )^{\frac{1}{2}} \Big \|_{\Phi}<\infty \Big ).
\end{split}\ee
$L^w_{\Phi}({\mathcal{M}},\el_{C}^{2})$ and $L^w_{\Phi}({\mathcal{M}}, \el_{R}^{2})$ are evidently quasi-Banach spaces, but we will see in Sect.\ref{Interp} that they can be renormed as Banach spaces provided $\Phi$ satisfies a mild condition.

\section{Interpolation}\label{Interp}

The main result of this section is a Marcinkiewicz type interpolation theorem for noncommutative weak Orlicz spaces.
We first introduce the following definition.

\begin{definition}\label{df:sublinearOper}
Let $\M$ (resp. $\mathcal{N}$) be a von Neumann algebra with a normal semifinite faithful trace $\tau$ (resp. $\nu$). A map
$T:L_{0}(\mathcal{M})\rightarrow L_{0}(\mathcal{N})$ is said to be quasilinear if
\begin{enumerate}[\rm (i)]

\item $|T(\alpha x)| \leq |\alpha||Tx|$ for all $x \in L_{0}(\mathcal{M})$ and $\alpha \in \mathbb{C};$ and

\item there is a constant $K>0$ so that for arbitrary operators $x,y\in L_{0}(\mathcal{M}),$
there exist two partial isometrics $u,v\in \mathcal{N}$ such that
\be
|T(x+y)|\leq K \big (u^{*}|Tx|u+v^{*}|T y| v \big ).
\ee

\end{enumerate}
In addition, if $K=1$ we call $T$ a sublinear operator.
\end{definition}

This definition of sublinear operators in the noncommutative setting is due to Q.Xu and first appeared in
Ying Hu's thesis \cite{Hu2007} (see also \cite{Hu2009}). Recall that for any $x,y \in L_0 (\N)$ there exist two partial isometrics $u,v\in \mathcal{N}$ such that
\beq\label{eq:OperatorModuleInequa}
| x + y| \le u^* |x| u + v^* |y| v,
\eeq
(see \cite{AAP}) and then a linear operator is sublinear. We recall that a quasilinear operator $T:L_{0}(\mathcal{M})\rightarrow L_{0}(\mathcal{N})$ is of weak type $(p,q)$ with $0<p \le q \le \infty,$ if
$$
\|Tx\|_{L^w_q}\leq C\|x\|_{L_p},~~ \forall x\in L_p(\mathcal{M}).
$$

The classical Marcinkiewicz interpolation theorem has been extended to include Orlicz spaces as interpolation
classes by A.Zygmund, A.P.Calder\'{o}n, S.Koizumi, I.B.Simonenko, W.Riordan, H.P.Heinig and A.Torchinsky
(for references see \cite{M1989}). The following result is a noncommutative analogue of the Marcinkiewicz type interpolation theorem for weak Orlicz spaces.

\begin{theorem}\label{th:inter}
Let $\M$ (resp. $\mathcal{N}$) be a von Neumann algebra with a normal semifinite faithful trace $\tau$ (resp. $\nu$). Suppose $0<p_{0}<p_{1}\leq \infty.$ Let $T :L_{0}(\mathcal{M})\rightarrow L_{0}(\mathcal{N})$ be a quasilinear operator and simultaneously of weak type $(p_i, p_i)$ for $i=0$ and
$i=1.$ If $\Phi$ is an Orlicz function with $p_{0}< a_{\Phi} \le b_{\Phi}< p_{1},$ then there exists a constant $C>0$ such that
\beq\label{eq:WeakInterInequa}
\sup_{t>0}t \Phi \big [ \mu_t (Tx) \big ] \leq C \sup_{t>0} t \Phi \big [ \mu_t (x) \big ]
\eeq
for all $x \in L^w_{\Phi}(\mathcal{M}).$ Consequently,
\beq\label{eq:WeakNormInterInequa}
\| Tx \|_{L^w_{\Phi} (\N)} \lesssim \| x \|_{L^w_{\Phi} (\M)},\quad \forall x \in L^w_{\Phi}(\mathcal{M}).
\eeq
\end{theorem}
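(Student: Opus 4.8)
The plan is to adapt the classical Marcinkiewicz splitting argument to the singular-number/distribution-function setting, exploiting the characterization of $\|\cdot\|_{L^w_\Phi}$ via $\lambda_s$ furnished by Proposition \ref{prop:NcWeakOrliczcharac} and the index estimates for $a_\Phi, b_\Phi$. Fix $x \in L^w_\Phi(\M)$ with $\sup_{t>0} t\Phi(\mu_t(x)) =: 1$ after normalization (so $\|x\|_{L^w_\Phi}\le 1$, and by Proposition \ref{prop:ineq}(1) also $\lambda_s(x)\Phi(s)\le 1$ for all $s$). For each height $s>0$ I would split $x = x_0^{(s)} + x_1^{(s)}$ via spectral projections of $|x|$: set $x_1^{(s)} = x\,e_{(0,s]}(|x|)$ (the ``small'' part, which lies in $L_{p_1}$, or in $L_\infty$ with norm $\le s$ when $p_1=\infty$) and $x_0^{(s)} = x\,e_{(s,\infty)}(|x|)$ (the ``large'' part, which lies in $L_{p_0}$). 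The usual computations give $\|x_0^{(s)}\|_{p_0}^{p_0} = \int_0^{\lambda_s(x)} \mu_t(x)^{p_0}\,dt$ and $\|x_1^{(s)}\|_{p_1}^{p_1} = \int_{\lambda_s(x)}^\infty \mu_t(x)^{p_1}\,dt$ (with the obvious modification for $p_1=\infty$). Then, using the quasilinearity of $T$ and inequality \eqref{eq:tri} on distribution functions, I bound $\lambda_{2\sigma}(Tx) \le \lambda_{\sigma/2}(Tx_0^{(s)}) + \lambda_{\sigma/2}(Tx_1^{(s)})$, and apply the weak-$(p_i,p_i)$ bounds together with Kolmogorov's inequality \eqref{eq:kol} (or \eqref{eq:WeakLpNormLmbda}) to estimate each term by $C\sigma^{-p_i}\|x_i^{(s)}\|_{p_i}^{p_i}$.

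The heart of the matter is then to choose $s = s(\sigma)$ cleverly — the natural classical choice is $s \approx \mu_\sigma$-type or, more precisely, to take $s$ comparable to $\sigma$ up to the correct power dictated by $\Phi$ — and to sum/integrate the resulting bound against $\Phi$. Concretely, I want to show $\lambda_\sigma(Tx)\,\Phi(\sigma) \le C$ uniformly in $\sigma$, which by Proposition \ref{prop:NcWeakOrliczcharac} is exactly \eqref{eq:WeakInterInequa}. After the splitting this reduces to controlling expressions of the shape
\[
\Phi(\sigma)\,\sigma^{-p_0}\int_0^{\lambda_s(x)}\mu_t(x)^{p_0}\,dt \quad\text{and}\quad \Phi(\sigma)\,\sigma^{-p_1}\int_{\lambda_s(x)}^\infty \mu_t(x)^{p_1}\,dt.
\]
Here I would change variables to integrate against $d(\Phi(\mu_t))$ or use the layer-cake form $\int_0^a \mu_t^{p}\,dt = \int_0^\infty \lambda_u(x)\wedge a \cdot p u^{p-1}\,du$ together with the pointwise bound $\lambda_u(x)\le 1/\Phi(u)$. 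The index hypothesis $p_0 < a_\Phi \le b_\Phi < p_1$ enters precisely through Proposition item (2): $u^{-a_\Phi}\Phi(u)$ is essentially nondecreasing and $u^{-b_\Phi}\Phi(u)$ essentially nonincreasing, so that $\int_0^\sigma \Phi(u)\,u^{-1-p_0}\,du \lesssim \Phi(\sigma)\sigma^{-p_0}$ (convergence at $0$ needs $p_0<a_\Phi$) and $\int_\sigma^\infty \Phi(u)\,u^{-1-p_1}\,du \lesssim \Phi(\sigma)\sigma^{-p_1}$ (convergence at $\infty$ needs $b_\Phi<p_1$). Plugging these in, with the choice $s\asymp\sigma$, collapses both pieces to a uniform constant $C = C(p_0,p_1,\Phi,K)$.

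I expect the main obstacle to be bookkeeping: handling the quasilinearity constant $K$ and the partial isometries $u,v$ (which are harmless for distribution functions since $\lambda_\bullet(w^*|z|w)\le\lambda_\bullet(z)$ for a partial isometry $w$), dealing cleanly with the endpoint $p_1=\infty$ (where the ``small part'' argument degenerates and one instead uses $\|x_1^{(s)}\|_\infty\le s$ directly to make $Tx_1^{(s)}$ contribute nothing to $\lambda_{\sigma/2}$ once $\sigma$ is large enough relative to $s$), and making rigorous the ``essentially monotone'' statements about $u^{-p}\Phi(u)$ so that the two integral estimates genuinely hold with $\Phi$-dependent constants. Once \eqref{eq:WeakInterInequa} is established, the normalized form immediately yields the quasi-norm inequality \eqref{eq:WeakNormInterInequa}: if $\|x\|_{L^w_\Phi}=c$, apply \eqref{eq:WeakInterInequa} to $x/c$ and use homogeneity of $\mu_t$ together with the definition of $\|\cdot\|_{L^w_\Phi}$ to conclude $\|Tx\|_{L^w_\Phi(\N)}\le C'\,c$.
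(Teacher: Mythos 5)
Your proposal is essentially correct but takes a genuinely different route from the paper. The paper first upgrades the weak-type $(p_i,p_i)$ hypotheses to genuine weak-to-weak estimates $L^w_{r_i}(\M)\to L^w_{r_i}(\N)$ for intermediate exponents $p_0<r_0<a_\Phi\le b_\Phi<r_1<p_1$, invoking the real interpolation identity $(L_{p_0},L_{p_1})_{\theta,q}=L_{r,q}$ for noncommutative $L_p$-spaces. With weak-to-weak bounds in hand, the splitting $x=x\,e_{(\alpha,\infty)}(|x|)+x\,e_{(0,\alpha]}(|x|)$ is estimated with no integrals at all: $\lambda_\alpha(Tx_0^\alpha)\le\alpha^{-r_0}\|Tx_0^\alpha\|_{L^w_{r_0}}^{r_0}\lesssim\alpha^{-r_0}\sup_{t>\alpha}t^{r_0}\lambda_t(x)$, and the supremum is absorbed into $\Phi$ using only the monotonicity of $t^{-r_0}\Phi(t)$ (and dually with $r_1$). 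You instead apply the weak-type $(p_i,p_i)$ hypotheses directly to the pieces $x_i^{(s)}$, obtaining bounds in terms of $\|x_i^{(s)}\|_{L_{p_i}}$, and then control those strong $L_{p_i}$-norms via the Fack--Kosaki formulas and the pointwise bound $\lambda_u(x)\le 1/\Phi(u)$. This is the classical Marcinkiewicz--Orlicz route, more elementary in that it avoids the real-interpolation black box; the paper's route is slicker once interpolation is available and keeps all estimates at the level of distribution functions.

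One small correction to your sketch: the integrals that actually arise after the layer-cake computation are
\[
\sigma^{-p_0}\int_\sigma^\infty \frac{u^{p_0-1}}{\Phi(u)}\,du\ \lesssim\ \frac{1}{\Phi(\sigma)}
\qquad\text{and}\qquad
\sigma^{-p_1}\int_0^\sigma \frac{u^{p_1-1}}{\Phi(u)}\,du\ \lesssim\ \frac{1}{\Phi(\sigma)},
\]
not the integrals $\int_0^\sigma\Phi(u)u^{-1-p_0}\,du$ and $\int_\sigma^\infty\Phi(u)u^{-1-p_1}\,du$ you wrote (the latter are what appear in the \emph{strong} $\Phi$-moment version, where one integrates $\lambda_\sigma(Tx)$ against $d\Phi(\sigma)$; for the weak estimate the bound must be pointwise in $\sigma$). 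To get the displayed estimates one should, as in the paper, fix $r_0\in(p_0,a_\Phi)$ and $r_1\in(b_\Phi,p_1)$ and use that $u^{-r_0}\Phi(u)$ is nondecreasing and $u^{-r_1}\Phi(u)$ is nonincreasing; ``essentially monotone'' is not enough as stated, the strict sandwiching of indices is what makes the integrals converge. Also, for the quasilinearity step, the correct form (as in the paper) is $\lambda_{2K\alpha}(Tx)\le\lambda_\alpha(Tx_0^\alpha)+\lambda_\alpha(Tx_1^\alpha)$, obtained from $|T(x_0+x_1)|\le K(u^*|Tx_0|u+v^*|Tx_1|v)$ and the operator-monotonicity of spectral projections; \eqref{eq:tri} as stated is for a literal sum, not for $T$ of a sum.
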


\begin{proof}
We chose $\theta_1, \theta_2, r_0, r_1$ such that
\be
p_{0} < r_0 < a_{\Phi} \leq b_{\Phi} < r_1 < p_1
\ee
and
\be
0 < \theta_1, \theta_2< 1,\; \frac{1}{r_{k}} = \frac{(1 - \theta_{k})}{p_{0}} + \frac{\theta_{k}}{p_{1}},\; k = 0, 1.
\ee
Then, by the real interpolation of noncommutative $L_p$ spaces (cf., Corollary 1.6.11 of \cite{Xu2008}), we have
\be
(L_{p_{0}}(\mathcal{M}), L_{p_{1}}(\mathcal{M}))_{\theta_{k},q}= L_{r_{k},q}(\mathcal{M}), k=0,1,
\ee
with equivalent quasi-norms. Since $T$ is simultaneously of weak type $(p_i, p_i)$ for $i=0$ and
$i=1,$ we obtain that
\begin{equation}\label{eq:weakr0}
\|Tx\|_{L^w_{r_0}}\leq A_0\|x\|_{L^w_{r_0}},\quad \forall x \in L^w_{p_0}(\mathcal{M}),
\end{equation}
and
\begin{equation}\label{eq:weakr1}
\|Tx\|_{L^w_{r_1}}\leq A_1\|x\|_{L^w_{r_1}}, \quad \forall x \in L^w_{p_1}(\mathcal{M}),
\end{equation}
where $A_0,A_1$ are both constants which depend only on $p_0, p_1,$ and the weak type $(p_i, p_i)$ norms of $T$ for $i=0$ and
$i=1.$

Now, take $x \in L^w_{\Phi}(\mathcal{M}).$ For any $\alpha>0$ let $x=x_0^\alpha+x_1^\alpha,$
where $x_0^\alpha= x e_{(\alpha,\infty)}(|x|).$ Since $t^{-r_0}\Phi(t)$ is an
increasing function in $(0, \8),$ by Proposition \ref{prop:ineq} (1) and \eqref{eq:weakr0} we have
\be\begin{split}
\lambda_{\alpha}( Tx_0^\alpha ) \leq & \alpha^{- r_0}\|Tx_0^\alpha\|^{r_0}_{L^w_{r_0}}\\
\leq & \alpha^{- r_0} A_0^{r_0}\|x_0^\alpha\|^{r_0}_{L^w_{r_0}}\\
= & \alpha^{- r_0} A_0^{r_0} \sup_{t>0}t^{r_0}\lambda_{t}( x_0^\alpha )\\
\leq & A_0^{r_0} \sup_{t>\alpha} \Big ( \frac{t}{\alpha} \Big )^{r_0}\lambda_{t}( x )\\
\leq & A_0^{r_0} \sup_{t>\alpha}\frac{\Phi(t)}{\Phi(\alpha)}\lambda_{t}( x ) \\
\le & \frac{A_0^{r_0}}{\Phi (\alpha)} \sup_{t>0} \Phi(t) \lambda_{t}( x ).
\end{split}\ee
Also, since $t^{-r_1}\Phi(t)$ is a decreasing function in $(0, \8),$ by Proposition \ref{prop:ineq} (1) and
\eqref{eq:weakr1} we obtain similarly
\be
\lambda_{\alpha}(Tx_1^\alpha) \leq \frac{A_1^{r_1}}{\Phi(\alpha)} \sup_{t>0}\Phi(t)\lambda_{t}(x).
\ee

On the other hand, by the sublinearity of $T$ and the basic properties of the distribution function $\lambda (|x|),$ such as $\lambda (a^* a) = \lambda (a a^*)$ and $\lambda_{\alpha + \beta} (x + y) \le \lambda_{\alpha} (x) + \lambda_{\beta} (y)$ for any $x , y \ge 0$, we have that
\begin{equation}\label{eq:lamb1}
\begin{split}
\lambda_{2 K \alpha} ( T x ) & \le \nu \big ( E_{(2 K \alpha, \8)} \big [ K(u^* | T x^{\alpha}_0| u + v^* |T x^{\alpha}_1| v ) \big ] \big )\\
& \le \lambda_{\alpha} (u^* | T x^{\alpha}_0 | u) + \lambda_{\alpha} (v^* |T x^{\alpha}_1| v)\\
& \leq \lambda_{\alpha}(|Tx_0^\alpha|)+\lambda_{\alpha}(|Tx_1^\alpha|),
\end{split}\end{equation}
where the first and third inequalities use the fact that $0 \le a \le b$ implies $E_{(\alpha, \8)} (a)$ is equivalent to a subprojection of $E_{(\alpha, \8)} (b)$ (e.g., \cite{FK}). Then, by \eqref{eq:lamb1} we have
\be\begin{split}
\lambda_{2K\alpha} (Tx)\leq & \frac{A_0^{r_0}}{\Phi(2 K \alpha)} \sup_{t>0}\Phi(t)\lambda_{t}(x) +
\frac{A_1^{r_1}}{\Phi(2 K \alpha)}\sup_{t>0}\Phi(t)\lambda_{t}(x)\\
= & \frac{C}{\Phi(2 K \alpha)} \sup_{t>0}\Phi(t)\lambda_{t}(x).
\end{split}\ee
By Proposition \ref{prop:NcWeakOrliczcharac} we obtain the desired inequality \eqref{eq:WeakInterInequa}.
\end{proof}

\begin{remark}\label{re:Inter}

We set
\be
L_p (\N)_{\mathrm{Her}} = \{x \in L_p (\N):\; x^* =x \}.
\ee
If $T$ is simultaneously of weak types $L_{p_i} (\M)_{\mathrm{Her}} \to L_{p_i} (\N)_{\mathrm{Her}} $ for $i=0$ and $i=1,$ then the conclusion of Theorem \ref{th:inter} holds for any hermitian operator $x \in L_{\Phi} (\M).$ The proof is the same as above and omitted.

\end{remark}

We have the following corollaries.

\begin{corollary}\label{cor:interstrong}
Let $\M$ (resp. $\mathcal{N}$) be a von Neumann algebra with a normal semifinite faithful trace $\tau$ (resp. $\nu$). Suppose $0<p_{0}<p_{1}\leq \infty.$
Let $T: L_0 (\M) \mapsto L_0 (\N)$ be a quasilinear operator and simultaneously of strong type $(p_i, p_i)$ for $i=0$ and
$i=1,$ i.e.,
\be
\|Tx\|_{L_{p_0}} \lesssim \|x\|_{L_{p_0}},\quad \forall x \in L_{p_0}(\mathcal{M}),
\ee
and
\be
\|Tx\|_{L_{p_1}} \lesssim \|x\|_{L_{p_1}},\quad \forall x \in L_{p_1}(\mathcal{M}).
\ee
Let $\Phi$ be an Orlicz function with $p_0 < a_{\Phi} \le b_{\Phi} < p_1.$ Then,
the conclusion of Theorem \ref{th:inter} holds.
\end{corollary}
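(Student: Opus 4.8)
\noindent The plan is to reduce Corollary~\ref{cor:interstrong} directly to Theorem~\ref{th:inter}; the only ingredient needed is the elementary fact that a strong type bound always dominates the corresponding weak type bound.

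First I would record that $\|y\|_{L^w_p}\le\|y\|_{L_p}$ for every $0<p\le\infty$ and every $y\in L_p(\M)$. For $0<p<\infty$ this is immediate from Kolmogorov's inequality \eqref{eq:kol}, which gives $\lambda_s(y)\le\|y\|_{L_p}^p/s^p$ for all $s>0$, whence by the distribution-function description \eqref{eq:WeakLpNormLmbda},
$$\|y\|_{L^w_p}=\sup_{s>0}s\,\lambda_s(y)^{1/p}\le\sup_{s>0}s\cdot\frac{\|y\|_{L_p}}{s}=\|y\|_{L_p}.$$
For $p=\infty$ one has $L^w_\infty(\M)=L_\infty(\M)$ isometrically, since $\|y\|_{L^w_\infty}=\sup_{t>0}\mu_t(y)=\lim_{t\to0^+}\mu_t(y)=\|y\|$, the last identity being the one already used in the proof of Proposition~\ref{prop:ineq}. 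Consequently, if $T$ is of strong type $(p,p)$, i.e.\ $\|Ty\|_{L_p}\lesssim\|y\|_{L_p}$, then $\|Ty\|_{L^w_p}\le\|Ty\|_{L_p}\lesssim\|y\|_{L_p}$, so $T$ is of weak type $(p,p)$.

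Applying this with $p=p_0$ and with $p=p_1$, the quasilinear operator $T$ of Corollary~\ref{cor:interstrong} is simultaneously of weak type $(p_0,p_0)$ and $(p_1,p_1)$; since $\Phi$ is an Orlicz function with $p_0<a_\Phi\le b_\Phi<p_1$, all hypotheses of Theorem~\ref{th:inter} are in force, and \eqref{eq:WeakInterInequa}--\eqref{eq:WeakNormInterInequa} hold for every $x\in L^w_\Phi(\M)$, which is exactly the conclusion of Corollary~\ref{cor:interstrong}. There is no genuine obstacle here; the only point requiring a moment's attention is the endpoint $p_1=\infty$, where ``strong type $(\infty,\infty)$'' is to be read as $\|Ty\|_\infty\lesssim\|y\|_\infty$ and one invokes $L^w_\infty=L_\infty$. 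Quasilinearity of $T$ is a hypothesis and is inherited unchanged. (Alternatively, one could first interpolate the two strong type bounds by the real method to obtain strong---and hence weak---type $(r_0,r_0)$ and $(r_1,r_1)$ at intermediate exponents $p_0<r_0<a_\Phi\le b_\Phi<r_1<p_1$, and then repeat the argument of Theorem~\ref{th:inter} verbatim; the direct route above is shorter.)
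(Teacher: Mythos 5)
Your proof is correct and follows exactly the paper's route: strong type $(p,p)$ implies weak type $(p,p)$ via Kolmogorov's inequality \eqref{eq:kol}, then invoke Theorem~\ref{th:inter}. You fill in a bit more detail (the explicit computation via \eqref{eq:WeakLpNormLmbda} and the $p=\infty$ endpoint), but the argument is essentially identical to the paper's.
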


\begin{proof}
If $T$ is of strong type $(p,p),$ by the Kolmogorov inequality \eqref{eq:kol} we immediately conclude that $T$ is of weak type $(p,p).$ An appeal to Theorem \ref{th:inter} yields the result.
\end{proof}

\begin{corollary}\label{cor:NcW-norm}
Let $\Phi$ be an Orlicz function with $1< a_{\Phi} \le b_{\Phi}<\8.$ Then
\beq\label{eq:WeakOrliczRenorm}
\|x\|_{L_\Phi^w} \approx \inf \Big \{ c>0:\; t \Phi \Big ( \frac{1}{t}\int_{0}^{t}\mu_s (x)d s /c \Big ) \leq 1, \forall t>0 \Big \}.
\eeq
Consequently, $ L_\Phi^w (\mathcal{M}) $ can be renormed as a Banach space.
\end{corollary}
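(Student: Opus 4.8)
The plan is to prove Corollary \ref{cor:NcW-norm} by exhibiting the maximal-function-type operator $x \mapsto (t \mapsto \frac{1}{t}\int_0^t \mu_s(x)\,ds)$ as a quasilinear (in fact sublinear) operator that is simultaneously of weak type $(p_0,p_0)$ and $(p_1,p_1)$ for suitable exponents $p_0<1<p_1$ straddling the interval $[a_\Phi,b_\Phi]$, so that Theorem \ref{th:inter} applies. Concretely, write $x^{**}(t) = \frac{1}{t}\int_0^t \mu_s(x)\,ds$ for the Hardy average of the singular-value function. The classical Hardy inequality gives $\|x^{**}\|_{L_p} \le \frac{p}{p-1}\|\mu(x)\|_{L_p} = \frac{p}{p-1}\|x\|_{L_p}$ for $1<p\le\infty$, so this map is of strong type $(p_1,p_1)$ for any $p_1\in(b_\Phi,\infty)$; and since $\mu_t(x) \le x^{**}(t)$ together with the standard weak-$(1,1)$ bound for the Hardy operator on $L_1(0,\infty)$ (i.e. $\sup_\alpha \alpha\,|\{t: x^{**}(t)>\alpha\}| \le \|x\|_1$, the Hardy--Littlewood maximal estimate in one dimension), the map is of weak type $(1,1)$, hence of weak type $(p_0,p_0)$ for any $p_0\in(1-\varepsilon,1)$ after composing with the trivial inclusion — actually I only need weak type at $p_0<a_\Phi$, and since $a_\Phi\ge 1$ I should take $p_0=1$ directly if $a_\Phi>1$, which is guaranteed by the hypothesis $1<a_\Phi$. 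So take $p_0=1$ and any $p_1\in(b_\Phi,\infty)$; the hypothesis $1<a_\Phi\le b_\Phi<\infty$ ensures $p_0<a_\Phi\le b_\Phi<p_1$.

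Next I would verify that this operator genuinely fits the framework of Theorem \ref{th:inter}. One subtlety: Theorem \ref{th:inter} is stated for operators $T:L_0(\M)\to L_0(\N)$, whereas $x^{**}$ is a function on $(0,\infty)$, not an element of some $L_0(\N)$. I would handle this by taking $\N = L_\infty(0,\infty)$ with Lebesgue measure (a commutative von Neumann algebra with n.s.f. trace), so that $x^{**} \in L_0(\N)$ and $\mu_t(x^{**})$ computed in $\N$ equals the decreasing rearrangement of $x^{**}$, which is $x^{**}$ itself since $x^{**}$ is already nonincreasing. Sublinearity in the sense of Definition \ref{df:sublinearOper} holds because $\mu_s(x+y)\le \mu_{s/2}(x)+\mu_{s/2}(y)$ integrates to $(x+y)^{**}(t)\le 2\,x^{**}(t/2) \cdot \tfrac12 \cdots$ — more carefully, $\int_0^t \mu_s(x+y)\,ds \le \int_0^t \mu_s(x)\,ds + \int_0^t \mu_s(y)\,ds$ by subadditivity of $s\mapsto\int_0^s\mu$, giving $(x+y)^{**}(t)\le x^{**}(t)+y^{**}(t)$ pointwise; in a commutative algebra this is exactly the sublinear inequality with $u=v=1$. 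Homogeneity is clear. Then Theorem \ref{th:inter} yields $\sup_{t>0} t\,\Phi(\mu_t(x^{**})) \lesssim \sup_{t>0} t\,\Phi(\mu_t(x))$, i.e. $\sup_{t>0} t\,\Phi(x^{**}(t)) \lesssim \sup_{t>0} t\,\Phi(\mu_t(x))$, and by Proposition \ref{prop:NcWeakOrliczcharac} and the definition of $\|\cdot\|_{L_\Phi^w}$ this translates to the right-hand side of \eqref{eq:WeakOrliczRenorm} being $\lesssim \|x\|_{L_\Phi^w}$.

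For the reverse inequality $\|x\|_{L_\Phi^w} \lesssim \inf\{c>0: t\,\Phi(x^{**}(t)/c)\le 1\ \forall t\}$, I would simply use the pointwise bound $\mu_t(x)\le x^{**}(t)$ and monotonicity of $\Phi$: if $t\,\Phi(x^{**}(t)/c)\le 1$ for all $t$, then $t\,\Phi(\mu_t(x)/c)\le 1$ for all $t$, so $\|x\|_{L_\Phi^w}\le c$; taking the infimum gives the inequality with constant $1$. Combining the two directions gives the stated equivalence \eqref{eq:WeakOrliczRenorm}. Finally, to conclude that $L_\Phi^w(\M)$ can be renormed as a Banach space, I would show the functional $\rho(x) := \inf\{c>0: t\,\Phi(x^{**}(t)/c)\le 1\ \forall t\}$ is actually a norm: it is positively homogeneous and vanishes only at $0$ (same argument as in Proposition \ref{prop:ineq}(2)), and the triangle inequality follows from the genuine subadditivity $(x+y)^{**}(t)\le x^{**}(t)+y^{**}(t)$ combined with convexity of $\Phi$ — the same convexity computation used for $L_\Phi$ norms, but now without the factor $2$ because the Hardy average is subadditive rather than merely quasi-subadditive. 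By the established equivalence, $\rho$ is equivalent to $\|\cdot\|_{L_\Phi^w}$, so $(L_\Phi^w(\M),\rho)$ is a Banach space (completeness is inherited from Proposition \ref{prop:conv}(2) via the equivalence of quasi-norms).

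I expect the main obstacle to be the bookkeeping around applying Theorem \ref{th:inter} to the Hardy operator: checking carefully that weak type $(1,1)$ of the one-dimensional Hardy--Littlewood averaging operator is available and correctly phrased in the noncommutative language of Definition \ref{df:sublinearOper} with the target algebra $L_\infty(0,\infty)$, and that the rearrangement of $x^{**}$ in that algebra is $x^{**}$ itself. The rest is a routine transcription of the classical argument that the Orlicz-averaged Hardy maximal function is controlled, together with the standard convexity estimate establishing the triangle inequality for $\rho$.
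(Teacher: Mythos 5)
Your proposal is correct and follows essentially the same route as the paper: apply the Marcinkiewicz-type interpolation (Theorem~\ref{th:inter}/Corollary~\ref{cor:interstrong}) to the sublinear Hardy-averaging operator $Tx = x^{**}$, mapping into the commutative algebra $L_\infty(0,\infty)$, and note that the trivial direction follows from monotonicity of $\mu_t(x)$. The only cosmetic difference is that you use weak type $(1,1)$ at the lower endpoint, whereas the paper simply picks $p_0 \in (1, a_\Phi)$ and invokes the classical Hardy inequality (strong type) at both endpoints via Corollary~\ref{cor:interstrong}; both are valid because $a_\Phi > 1$, and the rest (subadditivity $(x+y)^{**} \le x^{**} + y^{**}$ plus convexity of $\Phi$ giving the triangle inequality for the renormed functional) matches the intended argument.
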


\begin{proof} Since $\mu_t (x)$ is decreasing in $t \in (0, \8),$ we immediately get
\be
\|x\|_{L_\Phi^w} \le \inf \Big \{ c>0:\; t \Phi \Big ( \frac{1}{t}\int_{0}^{t}\mu_s (x)d s /c \Big ) \leq 1, \forall t>0 \Big \}.
\ee
Conversely, let $1< p \le \infty.$ Define $S:\; f(t)\mapsto \frac{1}{t}\int_{0}^{t}|f(s)|ds$ for $f \in L_{p}(0,\infty).$
Then, by the classical Hardy-Littlewood inequality there exists a constant $A_p>0$ such that
$$
\|S f \|_{p}\leq C_p\|f\|_{p},\quad \forall f\in L_{p}(0,\infty).
$$
Consequently,
\be
\|T x \|_p \leq A_p \| x \|_p,\quad \forall x \in L_p (\M),
\ee
where
\be
T x : = \frac{1}{t}\int_{0}^{t}\mu_s (x) d s, \quad x \in L_0 (\M).
\ee
Since $T$ is sublinear, by Corollary \ref{cor:interstrong} we obtain the reverse inequality and hence \eqref{eq:WeakOrliczRenorm} holds.
\end{proof}

\begin{corollary}\label{cor:BoydIndex}
Let $\Phi$ be an Orlicz function with $1<a_{\Phi}\le b_{\Phi}<\infty.$ Let $p_\Phi^w $ and $q_\Phi^w $ be respectively the lower and upper Boyd indices of $L_\Phi^w (\mathcal{M}).$ Then,
\begin{equation}\label{eq:Boyd}
a_{\Phi}\le p_\Phi^w \le q_\Phi^w \le b_{\Phi}.
\end{equation}
\end{corollary}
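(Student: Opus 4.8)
The plan is to establish the two inequalities in \eqref{eq:Boyd} separately, using the fact that the Boyd indices are characterized through the behaviour of the dilation operator $D_a$ on the (renormed Banach) function space associated with $L_\Phi^w(\mathcal{M})$. Recall that for a rearrangement invariant space the lower Boyd index $p_\Phi^w$ and upper Boyd index $q_\Phi^w$ are given by $p_\Phi^w = \lim_{a\to\infty}\frac{\log a}{\log \|D_a\|}$ and $q_\Phi^w = \lim_{a\to 0^+}\frac{\log a}{\log \|D_a\|}$, where $D_a f(t) = f(t/a)$; by Remark \ref{rk:NcWeakOrlicz} and Corollary \ref{cor:NcW-norm} this applies to $L_\Phi^w(\mathcal{M})$ under the hypothesis $1<a_\Phi\le b_\Phi<\infty$. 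So it suffices to estimate $\|D_a\|$ on $L_\Phi^w$ from above and below in terms of the indices $a_\Phi, b_\Phi$.

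First I would prove $a_\Phi \le p_\Phi^w$ and $q_\Phi^w \le b_\Phi$ by a direct computation of the norm of the dilation on $L_\Phi^w$. Using Proposition \ref{prop:NcWeakOrliczcharac}, the (quasi-)norm of $x$ is $\inf\{c>0:\ \lambda_s(x)\Phi(s/c)\le 1,\ \forall s>0\}$, and the effect of dilating the singular-value function by a factor $a$ is to replace $\lambda_s(x)$ by $a\,\lambda_s(x)$ (equivalently $\mu_t$ by $\mu_{t/a}$). Hence $\|D_a x\|_{L_\Phi^w}$ is governed by $\sup_s a\lambda_s(x)\Phi(s/c)$, and the comparison reduces to the scalar inequality relating $\Phi(\lambda s)$ with $\lambda^{a_\Phi}\Phi(s)$ or $\lambda^{b_\Phi}\Phi(s)$ that comes from characterization (2) of the indices in the excerpt: $t^{-a_\Phi}\Phi(t)$ is non-decreasing and $t^{-b_\Phi}\Phi(t)$ is non-increasing. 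A short manipulation then yields $\|D_a\|_{L_\Phi^w}\le \max\{a^{1/a_\Phi}, a^{1/b_\Phi}\}$ for $a\ge 1$ (and the reverse type of bound for $a\le 1$), which upon taking the logarithmic limits produces exactly $a_\Phi\le p_\Phi^w$ and $q_\Phi^w\le b_\Phi$. One must be a little careful that we work with the renormed Banach norm from Corollary \ref{cor:NcW-norm} rather than the bare quasi-norm, but since the two are equivalent the Boyd indices are unchanged, and the Hardy averaging operator $S$ commutes with dilations up to the same constant, so the estimate transfers.

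For the remaining inequalities $a_\Phi\le p_\Phi^w$ is already done above; what is genuinely left is the pair of bounds on the \emph{other} side, i.e. that $p_\Phi^w$ and $q_\Phi^w$ are not strictly larger/smaller than what the naive dilation estimate gives. In fact \eqref{eq:Boyd} as stated only asserts the sandwich $a_\Phi\le p_\Phi^w\le q_\Phi^w\le b_\Phi$, and the middle inequality $p_\Phi^w\le q_\Phi^w$ is automatic for any rearrangement invariant space. So the whole statement follows once the two outer estimates of $\|D_a\|$ are in hand. Thus I would: (i) invoke Corollary \ref{cor:NcW-norm} to pass to a genuine norm; (ii) compute $\|D_a x\|_{L_\Phi^w}$ exactly in terms of $\Phi$ and $\lambda(x)$ via Proposition \ref{prop:NcWeakOrliczcharac}; (iii) bound this using the monotonicity properties of $t^{-a_\Phi}\Phi(t)$ and $t^{-b_\Phi}\Phi(t)$ from item (2) of the index list; (iv) take logarithmic limits as $a\to\infty$ and $a\to 0^+$.

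The main obstacle I anticipate is a purely technical one: carefully verifying that the dilation operator acts on the renormed space with the expected norm, since the renorming in Corollary \ref{cor:NcW-norm} involves the Hardy operator $x\mapsto \frac1t\int_0^t\mu_s(x)\,ds$, and one must check that dilation and this averaging interact cleanly (they do, because $\frac1{at}\int_0^{at}\mu_s(x)\,ds$ is again an average of a dilated singular function). A secondary subtlety is that $b_\Phi$ may a priori equal $\infty$ in general, but the hypothesis $b_\Phi<\infty$ (equivalently $\Phi\in\Delta_2$) rules this out, so the estimates stay finite. Beyond that, the argument is a routine transcription of the classical computation of Boyd indices of an Orlicz (Lorentz–Orlicz) space, and no new idea is required.
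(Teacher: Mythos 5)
Your proposal is correct, but it takes a genuinely different route from the paper's. The paper deduces the Boyd-index bounds abstractly from the Marcinkiewicz-type interpolation result it has just established: for any $p<a_\Phi\le b_\Phi<q$ and any linear $T$ of weak types $(p,p)$ and $(q,q)$, Lindenstrauss--Tzafriri Theorem~2.b.11 gives strong types $(p_0,p_0)$ and $(q_0,q_0)$ for intermediate exponents $p<p_0<a_\Phi\le b_\Phi<q_0<q$, Corollary~\ref{cor:interstrong} then shows $T$ is bounded on $L^w_\Phi(\M)$, and the converse direction of the Boyd theory (LT Theorem~2.b.13) forces $p<p^w_\Phi\le q^w_\Phi<q$; letting $p\uparrow a_\Phi$, $q\downarrow b_\Phi$ completes the argument. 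You instead estimate the dilation norm $\|D_a\|$ directly from the distribution-function description of the quasi-norm in Proposition~\ref{prop:NcWeakOrliczcharac}: since $\lambda_{D_a f}(s)=a\lambda_f(s)$, and $t^{-a_\Phi}\Phi(t)$ is non-decreasing while $t^{-b_\Phi}\Phi(t)$ is non-increasing, one gets $\Phi(s/c)\le c^{-a_\Phi}\Phi(s)$ for $c\ge1$ and $\Phi(s/c)\le c^{-b_\Phi}\Phi(s)$ for $c\le1$, whence $\|D_a\|\lesssim a^{1/a_\Phi}$ for $a\ge1$ and $\|D_a\|\lesssim a^{1/b_\Phi}$ for $a\le1$; taking the logarithmic limits defining $p^w_\Phi$ and $q^w_\Phi$ gives exactly $a_\Phi\le p^w_\Phi$ and $q^w_\Phi\le b_\Phi$. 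Your approach is more elementary and self-contained, avoids citing both LT 2.b.11 and 2.b.13, and delivers quantitative dilation bounds which are strictly more information than the inequality on indices; the paper's approach has the virtue of exhibiting the corollary as a consequence of the section's main interpolation theorem, keeping the logical flow tight. Your concern about the quasi-norm versus the renormed Banach norm from Corollary~\ref{cor:NcW-norm} is the right one to raise, and your resolution is fine: the two are equivalent, so $\|D_a\|$ differs only by an absolute multiplicative constant, which disappears in the logarithmic limit.
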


\begin{proof}
Let $1\le p<a_{\Phi}\le b_{\Phi}<q<\infty.$ Suppose $T$ is a linear operator defined on $L_{p,1} [0,\infty )+L_{q,1} [0,\infty ),$
which is simultaneously of weak type $(p,p)$ and weak type $(q,q)$ in the sense of \cite{LT}. Take $p_{0}, q_{0}$ such that $p<p_{0}<a_{\Phi}\le
b_{\Phi}< q_{0}<q,$ Then by Theorem 2.b.11 in \cite{LT}, we have that $T$ is simultaneously of strong type $(p_{0},p_{0})$ and  strong type $(q_{0},q_{0}).$
Using Corollary \ref{cor:interstrong}, we get $T$ maps $L_\Phi^w (\mathcal{M})$ into itself. Then, by Theorem 2.b.13 in \cite{LT} we conclude that $ p<p_\Phi^w \le
q_\Phi^w <q.$ This completes the proof.
\end{proof}

\section{Martingale inequalities}\label{MartingaleInequa}

In this section, we will prove the weak type $\Phi$-moment versions of martingale transformations, Stein's inequalities, Khintchine's inequalities for Rademacher's random variables, and Burkholder-Gundy martingale inequalities in the noncommutative setting. We mainly follows the arguments in \cite{BC} using Theorem \ref{re:Inter} and Corollary \ref{cor:interstrong}.

In the sequel, without otherwise specified, we always denote by ${\mathcal{M}}$ a finite von Neumann algebra
with a normalized normal faithful trace $\tau.$ Let $({\mathcal{M}}_{n})_{n\geq 0}$ be an increasing sequence of von
Neumann subalgebras of ${\mathcal{M}}$ such that $\cup_{n\geq 0} {\mathcal{M}}_{n}$ generates ${\mathcal{M}}$ (in the $w^{*}$-topology). $({\mathcal{M}}_{n})_{n\geq 0}$ is called a filtration of ${\mathcal{M}}.$ The restriction of $\tau$ to ${\mathcal{M}}_{n}$ is still denoted by $\tau.$ Let ${\mathcal{E}}_{n}={\mathcal{E}}(.|{\mathcal{M}}_{n})$ be the conditional expectation of ${\mathcal{M}}$ with respect to
${\mathcal{M}}_{n}.$

A non-commutative $L^w_{\Phi}$-martingale with respect to $({\mathcal{M}}_{n})_{n\geq 0}$ is a sequence
 $x=(x_{n})_{n\geq 0}$ such that $x_{n} \in L^w_{\Phi}({\mathcal{M}}_{n})$ and
$${\mathcal{E}}_n(x_{n+1})=x_n$$
for any $n \ge 0.$ Let $\|x\|_{L^w_{\Phi}}=\sup_{n\geq 0}\|x_{n}\|_{L^w_{\Phi}}.$ If $\|x\|_{L^w_{\Phi}}
<\infty,$ then $x$ is said to be a bounded $L^w_{\Phi}$-martingale.

For convenience, we denote the weak type $\Phi$-moment of $x$ by
\be
\| x\|_{\Phi_w (\M)} : = \sup_{t > 0} t \Phi (\mu_t (x)), \quad x \in L_0 (\M).
\ee
We write $\| x\|_{\Phi_w } = \| x\|_{\Phi_w (\M)}$ in short when no confusion occurs.

Let $\alpha=(\alpha_{n})\subset\mathbb{C}$ be a sequence. Recall that a
map $T_{\alpha}$ on the family of martingale difference sequences
defined by $T_{\alpha}(dx)=(\alpha_{n}dx_{n})$ is called
 the martingale transform of symbol $\alpha$. It is clear that $(\alpha_{n}dx_{n})$ is indeed a martingale
 difference sequence. The corresponding martingale is $T_{\alpha}(x)=\sum_n \alpha_{n}dx_{n}.$

\begin{theorem}\label{th:MartingaleTrans}
Let $\alpha=(\alpha_{n})\subset\mathbb{C}$ be a bounded sequence and $T_{\alpha}$ the associated martingale transform. Let $\Phi$ be a Orlicz function such that $1<a_{\Phi} \leq b_{\Phi}<\infty.$ Then, for all bounded $L_{\Phi}^{w}$-martingales
$x=(x_{n}),$ we have
\begin{equation}\label{eq:MartingaleTrans}
\| T_{\alpha} x\|_{\Phi_w} \lesssim \| x\|_{\Phi_w},
\end{equation}
where $\lesssim$ depends only on $\Phi$ and $\sup_n |\alpha_n|.$ Consequently,
\beq\label{eq:MartingaleTransEquiv}
\| x\|_{\Phi_w}  \approx \Big \| \sum \varepsilon_n d x_n \Big \|_{\Phi_w},\quad \forall \varepsilon_n = \pm 1,
\eeq
for any bounded $L_{\Phi}^{w}$-martingales $x=(x_{n}),$ where ``$\approx$" depends only on $\Phi.$
\end{theorem}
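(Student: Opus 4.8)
The plan is to derive \eqref{eq:MartingaleTrans} as a direct application of the interpolation machinery developed in Section \ref{Interp}, and then to deduce \eqref{eq:MartingaleTransEquiv} from it. First I would observe that the martingale transform $T_\alpha$, viewed as a map $x \mapsto \sum_n \alpha_n d x_n$, extends to a linear (hence sublinear) operator on $L_0(\M)$ once we know it is bounded on a pair of $L_p$-spaces straddling the indices $a_\Phi, b_\Phi$. Precisely, since $1 < a_\Phi \le b_\Phi < \infty$, I would choose $1 < p_0 < a_\Phi \le b_\Phi < p_1 < \infty$ and invoke the classical fact (due to Pisier--Xu, and available in the reference book \cite{Xu2008}) that for a bounded scalar sequence $\alpha$ the martingale transform satisfies $\|T_\alpha x\|_{L_{p_i}} \lesssim (\sup_n|\alpha_n|)\,\|x\|_{L_{p_i}}$ for $i = 0, 1$, with constant depending only on $p_i$ and $\sup_n|\alpha_n|$. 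Thus $T_\alpha$ is simultaneously of strong type $(p_0,p_0)$ and $(p_1,p_1)$.

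Next I would apply Corollary \ref{cor:interstrong} with this pair $(p_0, p_1)$: since $p_0 < a_\Phi \le b_\Phi < p_1$, the corollary yields
\[
\sup_{t>0} t\,\Phi\big[\mu_t(T_\alpha x)\big] \lesssim \sup_{t>0} t\,\Phi\big[\mu_t(x)\big],
\]
which is exactly $\|T_\alpha x\|_{\Phi_w} \lesssim \|x\|_{\Phi_w}$, the claimed inequality \eqref{eq:MartingaleTrans}, with the implied constant depending only on $\Phi$ (through the choice of $p_0, p_1$) and on $\sup_n|\alpha_n|$. One technical point to check is that a bounded $L_\Phi^w$-martingale $x$ actually gives rise to an element of $L_0(\M)$ on which $T_\alpha$ acts and for which $\|x\|_{\Phi_w} = \sup_{t>0} t\Phi(\mu_t(x))$ is the relevant quantity; this is handled exactly as in the corresponding step of \cite{BC}, using that $\|x\|_{L_\Phi^w}<\infty$ forces convergence of $(x_n)$ in the appropriate sense.

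For the consequence \eqref{eq:MartingaleTransEquiv}, I would apply \eqref{eq:MartingaleTrans} twice with the specific symbol $\alpha = (\varepsilon_n)$, $\varepsilon_n = \pm 1$. In one direction this directly gives $\|\sum_n \varepsilon_n d x_n\|_{\Phi_w} \lesssim \|x\|_{\Phi_w}$. For the reverse direction I would note that the sequence $(\varepsilon_n)$ is its own inverse symbol: applying $T_\varepsilon$ to the martingale $y = \sum_n \varepsilon_n d x_n$ recovers $\sum_n \varepsilon_n^2 d x_n = \sum_n d x_n = x$, so \eqref{eq:MartingaleTrans} applied to $y$ yields $\|x\|_{\Phi_w} \lesssim \|\sum_n \varepsilon_n d x_n\|_{\Phi_w}$. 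Since $\sup_n|\varepsilon_n| = 1$, all implied constants depend only on $\Phi$, which is the assertion \eqref{eq:MartingaleTransEquiv}.

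The only real obstacle I anticipate is a bookkeeping one rather than a conceptual one: making sure that the $L_p$-boundedness of $T_\alpha$ is quoted in exactly the form needed — namely as a genuine bounded \emph{linear} operator on $L_{p_i}(\M)$ with constant controlled by $\sup_n|\alpha_n|$, uniformly over finite martingales, so that it extends to $L_0(\M)$ and Corollary \ref{cor:interstrong} applies verbatim. Once that is in place the argument is essentially immediate. (Strictly, one could avoid even this by observing that $T_\alpha$ is a linear contraction-type operator and arguing by density of finite martingales, exactly as in \cite{BC}; I would follow whichever route keeps the constants most transparent.)
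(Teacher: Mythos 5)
Your proposal is correct and follows exactly the paper's (very terse) argument: quote the Pisier--Xu $L_p$-boundedness of martingale transforms for a pair $p_0 < a_\Phi \le b_\Phi < p_1$, apply Corollary \ref{cor:interstrong}, and obtain \eqref{eq:MartingaleTransEquiv} by applying \eqref{eq:MartingaleTrans} twice with the self-inverse symbol $(\varepsilon_n)$. The only difference is that you spell out the bookkeeping the paper leaves implicit; the underlying route is identical.
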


\begin{proof}
By the $L_p$-boundedness of martingale transforms (see \cite{PX1997}) and Corollary \ref{cor:interstrong}, we immediately conclude
\eqref{eq:MartingaleTrans} and so \eqref{eq:MartingaleTransEquiv}.
\end{proof}

As in \cite{PX1997}, consider the mapping $T$ defined in $L_p (\M \bar{\otimes} \mathcal{B} (\el^2))$ by
$$
T \left(
\begin{matrix} a_{11} & \ldots & a_{1n} & \ldots \\
a_{21} & \ldots & a_{2n} & \ldots \\
\vdots & \vdots & \vdots & \vdots \\
a_{n1} & \ldots & a_{nn} & \ldots \\
\vdots & \vdots & \vdots & \ddots \end{matrix}\right) = \left(
\begin{matrix}
{\mathcal{E}}_1 (a_{11}) & 0 & 0 & \ldots\\
{\mathcal{E}}_2 (a_{21}) & 0 & 0 & \ldots \\
\vdots & \vdots & \vdots       & \vdots \\
{\mathcal{E}}_{n}(a_{n1}) & 0 & 0 & \ldots\\
 \vdots & \vdots & \vdots & \ddots
\end{matrix}\right).
$$
It is proved in \cite{PX1997} that $T$ is bounded on $L_p (\M \bar{\otimes} \mathcal{B} (\el^2))$ for any $1< p < \8.$ Then, by Corollary \ref{cor:interstrong} we have

\begin{theorem}\label{th:SteinInequC}
Let $\Phi$ be an Orlicz function with $1< a_{\Phi} \le b_{\Phi} < \8.$ Then,
\beq\label{eq:SteinInequC}
\Big \| \Big (\sum_{n} |{\mathcal{E}}_{n} (a_{n}) |^{2} \Big )^{\frac{1}{2}}\Big \|_{\Phi_w} \lesssim \Big \| \Big( \sum_{n}|a_{n}|^{2} \Big )^{\frac{1}{2}} \Big \|_{\Phi_w}
\eeq
for any finite sequence $(a_n)$ in $L^w_{\Phi} (\M).$ Similarly, we have
\beq\label{eq:SteinInequC}
\Big \| \Big (\sum_{n} |{\mathcal{E}}_{n} (a^*_{n}) |^{2} \Big )^{\frac{1}{2}}\Big \|_{\Phi_w} \lesssim \Big \| \Big( \sum_{n}|a_{n}|^{2} \Big )^{\frac{1}{2}} \Big \|_{\Phi_w}
\eeq
for any finite sequence $(a_n)$ in $L^w_{\Phi} (\M).$
\end{theorem}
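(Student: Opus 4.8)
The plan is to reduce \eqref{eq:SteinInequC} to an application of Corollary~\ref{cor:interstrong}, exactly as in the $L_p$-case treated in \cite{PX1997}. First I would recall that the operator $T$ displayed just above the statement acts on $L_0(\M\bar\otimes\B(\el^2))$ by taking a matrix, applying the conditional expectations $\E_n$ entrywise to the first column, and zeroing out everything else; Pisier--Xu proved $T$ is bounded on $L_p(\M\bar\otimes\B(\el^2))$ for every $1<p<\infty$. The point is that $T$ is \emph{linear}, hence sublinear in the sense of Definition~\ref{df:sublinearOper} (by \eqref{eq:OperatorModuleInequa}), and it maps $L_0$ into $L_0$. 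Therefore it is simultaneously of strong type $(p_0,p_0)$ and $(p_1,p_1)$ for any choice $1<p_0<a_\Phi\le b_\Phi<p_1<\infty$. Applying Corollary~\ref{cor:interstrong} to $T$ on the algebra $\M\bar\otimes\B(\el^2)$ with this $\Phi$ gives
\[
\|Ty\|_{\Phi_w(\M\bar\otimes\B(\el^2))}\lesssim \|y\|_{\Phi_w(\M\bar\otimes\B(\el^2))},\qquad y\in L_\Phi^w(\M\bar\otimes\B(\el^2)).
\]

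Second, I would specialize $y$ to the column matrix $y=T(a)$ associated (via the map $a\mapsto T(a)$ from the proof of Proposition~\ref{prop:WeakOrliczNorm}, which I will rename to avoid clashing with the operator $T$ here) to the finite sequence $(a_n)$, i.e.\ the matrix whose first column is $(a_n)_{n\ge0}$ and all of whose other entries vanish. Then $Ty$ is the column matrix with first column $(\E_n(a_n))_{n\ge0}$. Using the identity
\[
\Big\|\Big(\sum_n|b_n|^2\Big)^{1/2}\Big\|_{\Phi_w(\M)}=\big\||(\text{column of }b_n)|\big\|_{\Phi_w(\M\bar\otimes\B(\el^2))}
\]
— which is the $\Phi_w$-analogue of the computation $\|a\|_{L^w_\Phi(\M,\el_C^2)}=\|T(a)\|_{L^w_\Phi(\M\bar\otimes\B(\el^2))}$ already recorded in the proof of Proposition~\ref{prop:WeakOrliczNorm}, and which holds because $\|\cdot\|_{\Phi_w}$ depends only on the singular numbers, hence only on $|z|$ — the displayed interpolation bound becomes precisely \eqref{eq:SteinInequC} for the column version. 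For the row version one instead feeds in the row matrix with first row $(a_n^*)$, or equivalently applies the column statement to the adjoints and uses $\mu_t(z)=\mu_t(z^*)$ together with $\E_n(a_n^*)=\E_n(a_n)^*$; this gives the second inequality.

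I expect the only genuinely delicate point to be bookkeeping about \emph{finite} sequences versus genuine elements of $L_\Phi^w(\M\bar\otimes\B(\el^2))$: one must make sure the column matrix $T(a)$ really lies in $L_\Phi^w$ (it does, being a finite sequence, so it is even in $L_p$ for all $p$ by finiteness of $\tau$, but this should be stated), and that the two-sided estimate is applied to exactly this element. Everything else is a direct transcription of the $L_p$ argument of \cite{PX1997} with $L_p$ replaced by $L_\Phi^w$ and Pisier--Xu's interpolation replaced by Corollary~\ref{cor:interstrong}; there is no new analytic obstacle, since the boundedness of $T$ on each $L_p$ is quoted from \cite{PX1997} and the passage to the weak Orlicz norm is handled entirely by the interpolation theorem proved in Section~\ref{Interp}.
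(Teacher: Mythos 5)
Your proposal matches the paper's own argument essentially verbatim: the paper records the map $T$ on $L_p(\M\bar\otimes\B(\el^2))$, quotes \cite{PX1997} for its boundedness for all $1<p<\infty$, and then deduces the theorem by a single appeal to Corollary~\ref{cor:interstrong}, which is exactly your route. The extra details you supply --- specializing to the column matrix as in Proposition~\ref{prop:WeakOrliczNorm}, and handling the row case via adjoints using $\mu_t(z)=\mu_t(z^*)$ and $\E_n(a_n^*)=\E_n(a_n)^*$ --- are correct expansions of what the paper leaves implicit.
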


The following is the weak type $\Phi$-moment version of noncommutative Kintchine's inequalities for Rademacher's sequences.

\begin{theorem}\label{th:KhintInequa}
Let $\Phi$ be an Orlicz function and $\{ \varepsilon_k \}$ a Rademacher's sequence on a probability space $(\Omega, P).$
\begin{enumerate}[\rm (1)]

\item If $1< a_{\Phi} \le b_{\Phi} <2,$ then for any finite sequence $\{ x_k \}$ in $L^w_{\Phi} (\M)$
\begin{equation}\label{eq:Khint1}
\begin{split}
\Big \| & \sum_k \varepsilon_k x_k \Big \|_{\Phi_w(L_{\8}(\Omega) \bar{\otimes} \M)}\\
& \approx \inf \left \{ \Big \| \Big ( \sum_k |y_k|^2 \Big )^{\frac{1}{2}} \Big \|_{\Phi_w (\M)} + \Big \| \Big ( \sum_k |z^*_k|^2 \Big )^{\frac{1}{2}} \Big \|_{\Phi_w (\M)} \right \}
\end{split}\end{equation}
where the infimum runs over all decompositions $x_k = y_k + z_k$ with $y_k, z_k \in L^w_{\Phi} (\M)$ and $``\approx"$ depends only on $\Phi.$

\item If $2< a_{\Phi} \le b_{\Phi} <\8,$ then for any finite sequence $\{ x_k \}$ in $L^w_{\Phi} (\M),$
\begin{equation}\label{eq:Khint2}
\begin{split}
\Big \| & \sum_k \varepsilon_k x_k \Big \|_{\Phi_w(L_{\8}(\Omega) \bar{\otimes} \M)}\\
& \approx \Big \| \Big ( \sum_k |x_k|^2 \Big )^{\frac{1}{2}} \Big \|_{\Phi_w (\M)} + \Big \| \Big ( \sum_k |x^*_k|^2 \Big )^{\frac{1}{2}} \Big \|_{\Phi_w (\M)}
\end{split}\end{equation}
where $``\approx"$ depends only on $\Phi.$
\end{enumerate}
\end{theorem}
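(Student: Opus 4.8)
The plan is to reduce both parts to the corresponding interpolation machinery developed in Section \ref{Interp}, following the strategy of \cite{BC} but replacing the $L_\Phi$-estimates there with weak type $\Phi$-moment estimates. The starting point is the classical noncommutative Khintchine inequality in $L_p$ due to Lust-Piquard and Pisier \cite{LP1986, LPP}: for $1 \le p < 2$ one has the two-sided estimate $\|\sum_k \varepsilon_k x_k\|_{L_p(L_\8(\Omega)\bar\otimes\M)} \approx \inf\{\|(\sum_k |y_k|^2)^{1/2}\|_p + \|(\sum_k |z_k^*|^2)^{1/2}\|_p\}$, and for $2 < p < \8$ the same with the single term $\max\{\|(\sum|x_k|^2)^{1/2}\|_p, \|(\sum|x_k^*|^2)^{1/2}\|_p\}$. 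The idea is to view the map $x = (x_k) \mapsto \sum_k \varepsilon_k x_k$ and its (formal) inverse as sublinear (or quasilinear) operators between suitable spaces of sequences, which are simultaneously bounded on two $L_{p_i}$-scales straddling $a_\Phi$ and $b_\Phi$, and then invoke Corollary \ref{cor:interstrong}.

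For part (1) I would proceed as follows. First fix $p_0, p_1$ with $1 < p_0 < a_\Phi \le b_\Phi < p_1 < 2$. Consider the sum operator $U: (x_k) \mapsto \sum_k \varepsilon_k x_k$ acting on finite sequences. The $L_{p_i}$-Khintchine inequality gives $\|U(x)\|_{L_{p_i}(L_\8(\Omega)\bar\otimes\M)} \lesssim \|x\|_{L_{p_i}(\M;\el^2_C + \el^2_R)}$ where the norm on the right is the infimum over decompositions, i.e. the norm of the space $L_{p_i}(\M;\el^2_C) + L_{p_i}(\M;\el^2_R)$; this operator is sublinear in the sense of Definition \ref{df:sublinearOper} because the column/row square-function norms are genuine norms and the triangle inequality \eqref{eq:OperatorModuleInequa} applies coordinatewise. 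Applying Corollary \ref{cor:interstrong} with the Orlicz function $\Phi$ yields $\|U(x)\|_{\Phi_w} \lesssim \|x\|$ where the right side is the weak type $\Phi$-moment analogue of the sum space, namely $\inf\{\|(\sum|y_k|^2)^{1/2}\|_{\Phi_w} + \|(\sum|z_k^*|^2)^{1/2}\|_{\Phi_w}\}$ over decompositions $x_k = y_k + z_k$. For the reverse inequality, the classical proof produces, from $f = \sum_k \varepsilon_k x_k$, an explicit decomposition $x_k = y_k + z_k$ with $y_k, z_k$ obtained by applying certain bounded projections (built from conditional expectations with respect to the Rademacher filtration, exactly as in \cite{LPP, LPX, MS}); the relevant operators $f \mapsto (y_k)$ and $f \mapsto (z_k)$ are bounded on $L_{p_i}(L_\8(\Omega)\bar\otimes\M)$ into $L_{p_i}(\M;\el^2_C)$ and $L_{p_i}(\M;\el^2_R)$ respectively for both $i$, and are linear hence sublinear, so another application of Corollary \ref{cor:interstrong} transfers these bounds to the $\Phi_w$-level and gives the $\gtrsim$ direction.

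Part (2) is dual in spirit and in fact easier. Fix $p_0, p_1$ with $2 < p_0 < a_\Phi \le b_\Phi < p_1 < \8$. Now the $L_{p_i}$-Khintchine inequality says $\|U(x)\|_{L_{p_i}} \approx \|(\sum|x_k|^2)^{1/2}\|_{p_i} + \|(\sum|x_k^*|^2)^{1/2}\|_{p_i}$, i.e. $U$ maps $L_{p_i}(\M;\el^2_C \cap \el^2_R)$ boundedly both ways, and again $U$ and its inverse (on the closed subspace it generates) are sublinear. Corollary \ref{cor:interstrong}, applied in both directions, then upgrades this to the weak type $\Phi$-moment estimate \eqref{eq:Khint2}. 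One technical point to watch throughout is that Corollary \ref{cor:interstrong} as stated applies to operators on $L_0(\M)$ for a single von Neumann algebra, so one must first recast the sequence spaces $L_p(\M;\el^2_C)$, $L_p(\M;\el^2_R)$ (and their sums/intersections) as subspaces of $L_p(\M\bar\otimes\B(\el^2))$ via the column/row embeddings used in Proposition \ref{prop:WeakOrliczNorm}, and check that the operators in question correspond to genuine sublinear operators on $L_0(\M\bar\otimes\B(\el^2))$ restricted to those subspaces; this bookkeeping, together with verifying that the decomposition-producing operators in part (1) really are uniformly bounded on the two endpoint scales, is where the bulk of the work lies. The main obstacle is precisely this last point in part (1): establishing that the explicit decomposition $x_k = y_k + z_k$ extracted from the classical $L_p$-proof is given by operators that behave well simultaneously at two exponents $p_0, p_1$ on the same side of $2$ — once that is in hand, the interpolation step is automatic from Section \ref{Interp}.
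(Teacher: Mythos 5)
There is a genuine gap, and it is in the direction you yourself flagged as ``the main obstacle'' --- but the difficulty is more than bookkeeping. Your strategy for the lower estimate in part (1) assumes that the classical $L_p$-proof hands you \emph{linear} (hence sublinear) operators $f \mapsto (y_k)$ and $f \mapsto (z_k)$ producing the row/column decomposition, so that Corollary \ref{cor:interstrong} can be invoked. No such linear projections exist: the space $L_p(\M;\el^2_C + \el^2_R)$ is a sum space whose norm is an infimum over decompositions, the near-optimal decomposition genuinely depends on $f$ and on $p$, and there is no bounded linear map from $L_p(L_\infty(\Omega)\bar\otimes\M)$ (or even from the Rademacher subspace) splitting $x_k$ into a column part and a row part. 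Consequently the Marcinkiewicz machinery of Section \ref{Interp}, which applies to a \emph{fixed} quasilinear operator acting simultaneously at two endpoints, has nothing to act on here.

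The paper's actual proof of the lower estimate of \eqref{eq:Khint1} goes through a fundamentally nonlinear step: after transferring to lacunary Fourier series on $\mathcal{N}=L_\infty(\mathbb{T})\bar\otimes\M$ via \eqref{eq:Khint1'}, it applies a Riesz-type outer factorization $f=gh$ in the noncommutative Hardy space $\mathcal{H}^w_{\Phi}(\mathcal{A})$ (Lemma \ref{le:RieszDecompNcWeakOrliczHardy}) to \emph{construct} the decomposition, and then controls the resulting row and column square functions by the weak $\Phi$-moment of $f$ using the Fourier-coefficient estimate of Lemma \ref{le:FourierCoefficientsSquare} and the Littlewood--Paley-type Lemma \ref{le:Fourier}. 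Interpolation (Corollary \ref{cor:interstrong}) does appear in the paper, but only to bound the auxiliary \emph{linear} operators $F_n$ and $\triangle_n$; it is not, and cannot be, used to produce the decomposition itself. Your treatment of part (2) has a related problem: the intersection space $L_p(\M;\el^2_C\cap\el^2_R)$ with the max of row and column norms is not the $L_p$-space of any single von Neumann algebra, so there is no sublinear $T$ between two noncommutative $L_p$-scales to which Theorem \ref{th:inter} applies; the paper instead reduces part (2) to the lower estimate of part (1) by duality, as in \cite{BC}. The parts of your plan that do survive are the easy directions (the upper estimate of (1) and the lower estimate of (2)), where one can indeed interpolate the linear sum maps on the column and row embeddings separately.
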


\begin{proof}
By the argument in \cite{BC}, we need only to prove the lower estimate of \eqref{eq:Khint1}. By the analogue argument in \cite{LPP}, we are reduced to show for any finite sequence $\{x_k \}$ in $L^w_{\Phi}(\M),$
\begin{equation}\label{eq:Khint1'}
\begin{split}
\inf \Big \{ \Big \| \Big ( & \sum_{k=0}^{n} |y_{k}|^2 \Big )^{ \frac{1}{2}} \Big \|_{\Phi_w} +
\Big \| \Big ( \sum_{k=0}^{n} |z_{k}^{*}|^{2} \Big )^{\frac{1}{2} } \Big \|_{\Phi_w} \Big \}\\
& \lesssim \Big \| \sum_{k=0}^{n}x_{k}z^{3^k} \Big \|_{\Phi_w (L_{\8} (\mathbb{T}) \bar{\otimes} \M)},
\end{split}
\end{equation}
where the infimun runs over all decomposition $x_k = y_k + z_k$ with $y_{k}$ and $z_{k}$ in $L^w_{\Phi}({\M}).$

To this end, we consider $\mathcal{N}=L_{\infty}(\mathbb{T})\bar{\otimes}\mathcal{M}$ equipped with the tensor product trace $\nu=\int\otimes\tau$  and
$\mathcal{A}= \mathcal{H}_{\infty}(\mathbb{T})\overline{\otimes}\mathcal{M}.$ Then, $\mathcal {A}$ is a finite maximal subdiagonal algebras of
$\mathcal {N}$ with respect to $\mathcal{E}=\int \otimes I_{\mathcal{M}}:\mathcal{N}\rightarrow\mathcal{M}$ (e.g., see \cite{PX2003}). Since $L_1 (\N) = L_1 (\mathbb{T}, L_1 (\M))$ we can define Fourier coefficients for any $f \in L_1 (\N)$ by
\be
\hat{f} (n) = \frac{1}{2 \pi} \int_{\mathbb{T}} f (z) \bar{z}^n d m (z),\; \forall n \in \mathbb{Z},
\ee
where $d m$ is the normalized Lebesgue measure on $\mathbb{T}.$ It is easy to check that
\be
\A = \{ f \in \N:\; \hat{f} (n) = 0,\; \forall n < 0 \}.
\ee

For any $n \in \mathbb{Z}$ we define $F_n$ the linear mapping such that $F_n (f) = \hat{f} (n)$ for any $L_1 (\N).$ Then $F_n$ is both a contract from $L_1 (\N)$ into $L_1 (\M)$ and from $\N$ into $\M.$ Hence, for an Orlicz function $\Phi$ with $1 < a_{\Phi} \le b_{\Phi} < \8,$ by Corollary \ref{cor:interstrong} we have
\beq\label{eq:FourierCoefficientsBound}
\| \hat{f} (n) \|_{\Phi_w} \lesssim \| f \|_{\Phi_w},\quad \forall f \in L^w_{\Phi} (\N),
\eeq
for any $n \in \mathbb{Z}.$

\begin{lemma}\label{le:FourierCoefficientsSquare}Let $\Phi$ be an Orlicz function with $1 < a_{\Phi} \le b_{\Phi} < \8.$ For any finite sequence $(f_k)$ in $L^w_{\Phi} (\N)$ and any $n \in \mathbb{Z},$ we have
\be
\Big \| \Big ( \sum_k | \hat{f}_k (n) |^2 \Big )^{\frac{1}{2}} \Big \|_{\Phi_w (\M)} \lesssim \Big \| \Big ( \sum_k | f_k |^2 \Big )^{\frac{1}{2}} \Big \|_{\Phi_w (\N)}.
\ee

\end{lemma}

\begin{proof}
Let $1 \le k \le K.$ Applying \eqref{eq:FourierCoefficientsBound} on $M_K (\M)$ instead of $\M$ with
\be
f = \sum^K_{k=1} E_{k 1} \otimes f_k = \left (
\begin{matrix}
f_1 & 0 & \ldots & 0\\
f_2 & 0 & \ldots & 0\\
\vdots & \vdots & \vdots & \vdots \\
f_K & 0 & \ldots & 0
 \end{matrix} \right )_{K \times K}.
\ee
yields the required inequality.
\end{proof}

For an Orlicz function $\Phi,$ we denote by $\mathcal{H}^w_{\Phi}
(\mathcal{A})$ the completion of $\mathcal{A}$ under the quasinorm
$\|\cdot \|_{L^w_{\Phi} (\N)}.$ If $\Phi$ is an Orlicz function with $1 < a_{\Phi} \le b_{\Phi} < \8,$ then by Corollary \ref{cor:BoydIndex} we have
$L^w_{\Phi} (\N) \subset L_1 (\N)$ and 
\be 
\mathcal{H}^w_{\Phi} (\mathcal{A}) = \Big \{ f \in L^w_{\Phi} (\N):\; \hat{f} (n) =0,\; \forall n <0 \Big \}. 
\ee
In this case,
\beq\label{eq:WeakOrliczHardyHp} 
\mathcal{H}^1 (\mathcal{A}) \cap L^w_{\Phi} (\N) = \mathcal{H}^w_{\Phi} (\mathcal{A}).
\eeq

\begin{lemma}\label{le:RieszDecompNcWeakOrliczHardy}  Let $\Phi$ be an Orlicz function with $1< a_{\Phi}\leq b_{\Phi}<\infty.$ Let $\Phi^{(2)}(t)=\Phi(t^{2}).$ Then, for any $f \in \mathcal{H}^w_{\Phi}({\mathcal{A}})$ and $\varepsilon>0,$ there exist two functions $g,h\in \mathcal{H}^w_{\Phi^{(2)}}({\mathcal{A}})$ such that $f=gh$ with \be
\max \Big \{\| |g|^2 \|_{\Phi_w (\N)},\; \| |h|^2 \|_{\Phi_w (\N)}\Big \} \lesssim \| |f| \|_{\Phi_w (\N)} + \varepsilon.
\ee
\end{lemma}

\begin{proof}
By slightly modifying the proof of Lemma 4.1 in \cite{BC} we can prove this lemma and omit the details.
\end{proof}

\begin{lemma}\label{le:Fourier} Let $\Phi$ be an Orlicz function with $2<a_{\Phi}\leq b_{\Phi}<\infty.$ Let $\{I_n =(\frac{3^n}{2},3^n]:n\in\mathbb{N}\}$
and $\triangle_{n}$ the Fourier multiplier by the indicator function $\chi_{I_{n}},$ i.e. \be \triangle_{n}(f)(z) = \sum_{k \in I_{n}}
\hat{f}(k)z^{k} \ee for any trigonometric polynomial $f$ with coefficients in $ L^w_{\Phi}({\mathcal{M}}).$ Then,
\be
\Big \| \Big ( \sum_{n}\triangle_{n}(f)^{*}\triangle_{n}(f) \Big )^{\frac{1}{2}}  \Big \|_{\Phi_w (\N)} \lesssim \| f \|_{\Phi_w (\N)},
\ee
for any $ f \in \mathcal{H}^w_{\Phi}(\N).$
\end{lemma}

\begin{proof}
The proof can be done as similar to the one of Lemma 4.2 in \cite{BC} by using Corollary \ref{cor:interstrong} and the details are omitted.
\end{proof}

Now, we are ready to prove \eqref{eq:Khint1'}. Indeed, the proof can be obtained by using Lemmas \ref{le:FourierCoefficientsSquare}, \ref{le:RieszDecompNcWeakOrliczHardy} and \ref{le:Fourier} as similar to the one of Theorem 4.1 in \cite{BC}. We omit the details.
\end{proof}

\begin{remark}
\begin{enumerate}[\rm (1)]

\item Note that Khintchine's inequality is valid for $L_1$-norm in both commutative and noncommutative settings (cf., \cite{LPP}). We could conjecture that the right condition in Theorem \ref{th:KhintInequa} (1) should be $b_{\Phi} <2$ without the additional restriction one $1<a_{\Phi}.$ However, our argument seems to be inefficient in this case. We need new ideas to approach it.

\item Evidently, the weak type $\Phi$-moment Khintchine inequalities in Theorem \ref{th:KhintInequa} imply those for $L^w_{\Phi}$ norms, which, by Corollary \ref{cor:NcW-norm}, can be considered as particular cases of more general ones in \cite{LP1992} and then in \cite{LPX, MS}.
\end{enumerate}
\end{remark}

Now, we are in a position to state and prove the weak type $\Phi$-moment version of noncommutative Burkholder-Gundy martingale inequalities.

\begin{theorem}\label{th:BG}
Let $\M$ be a finite von Neumann algebra with a normalized normal faithful trace $\tau$ and $({\mathcal{M}}_{n})_{n\geq 0}$ an increasing filtration
of subalgebras of ${\mathcal{M}}.$ Let $\Phi$ be an Orlicz function and $x = \{x_{n}\}_{n\geq 0}$ a noncommutative $L^w_{\Phi}$-martingale with respect to $({\mathcal{M}}_{n})_{n\geq 0}.$
\begin{enumerate}[\rm (1)]

\item If $1<a_{\Phi} \le b_{\Phi}<2,$ then
\begin {equation}\label{eq:BG1}
\begin{split}
\| x \|_{\Phi_w} \approx \inf \Big \{ \Big \| \Big ( \sum_{n= 0 }^{\infty} |d y_n |^{2} \Big )^{ \frac{1}{2}} \Big \|_{\Phi_w} + \Big \| \Big ( \sum_{n= 0 }^{\infty} |d z_n^{*}|^{2} \Big )^{ \frac{1}{2}} \Big \|_{\Phi_w} \Big \}
\end{split}
\end{equation}
where the infimum runs over all decomposition $x_n = y_n + z_n$ with $\{ d y_n \}$ in $L^w_{\Phi}({\mathcal{M}}, \el^2_C)$ and $\{ d z_n \}$ in
$L^w_{\Phi}({\mathcal{M}}, \el^2_R)$ and ``$\approx$" depends only on $\Phi.$

\item If $2 <a_{\Phi} \le b_{\Phi}<\infty,$ then
\begin {equation}\label{eq:BG2}
\| x \|_{\Phi_w} \approx \Big \| \Big ( \sum_{n= 0 }^{\infty} |d x_n |^{2} \Big )^{ \frac{1}{2}} \Big \|_{\Phi_w} + \Big \| \Big ( \sum_{n= 0 }^{\infty} |d x_n^{*}|^{2} \Big )^{ \frac{1}{2}} \Big \|_{\Phi_w},
\end{equation}
where ``$\approx$" depends only on $\Phi.$


\end{enumerate}
\end{theorem}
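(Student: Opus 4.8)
The plan is to follow the classical scheme for the Burkholder--Gundy inequality in the noncommutative setting, exactly as in \cite{PX1997} and its $\Phi$-moment version in \cite{BC}, transplanting each ingredient to the weak type $\Phi$-moment setting using the interpolation machinery of Section \ref{Interp}. The point is that every operator appearing in the standard proof is quasilinear (indeed sublinear) and simultaneously bounded on $L_{p_0}(\M)$ and $L_{p_1}(\M)$ for a suitable pair $p_0 < a_\Phi \le b_\Phi < p_1$; Corollary \ref{cor:interstrong} then upgrades this automatically to a weak type $\Phi$-moment bound, and Proposition \ref{prop:NcWeakOrliczcharac} lets us phrase everything through $\|\cdot\|_{\Phi_w}$.

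First I would handle case (2), $2 < a_\Phi \le b_\Phi < \infty$, which is the easier direction. Pick $2 < p_0 < a_\Phi \le b_\Phi < p_1 < \infty$. The upper estimate $\|(\sum |dx_n|^2)^{1/2}\|_{\Phi_w} + \|(\sum |dx_n^*|^2)^{1/2}\|_{\Phi_w} \lesssim \|x\|_{\Phi_w}$ follows by applying Corollary \ref{cor:interstrong} to the (sublinear) maps sending $x$ to its column/row square functions, which are bounded on $L_p(\M)$ for $p_0 \le p \le p_1$ by the $L_p$ Burkholder--Gundy inequality of Pisier--Xu; symmetrically for the reverse estimate, the map reconstructing the martingale from its differences is bounded on the $L_p$ version of $H_p^c \cap H_p^r$, so again Corollary \ref{cor:interstrong} applies. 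One must be slightly careful that these operators act between the right spaces (e.g.\ on $L_p(\M\bar\otimes\B(\ell^2))$, with the column and row subspaces), but this is exactly the setup already used in Theorems \ref{th:SteinInequC} and \ref{th:MartingaleTrans}, so it causes no new difficulty.

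For case (1), $1 < a_\Phi \le b_\Phi < 2$, the proof proceeds by duality/decomposition as in \cite{BC}, and here the weak type $\Phi$-moment Khintchine inequality, Theorem \ref{th:KhintInequa}(1), is the essential new input: one writes $x = \sum dx_n$, uses the contraction principle and Theorem \ref{th:MartingaleTrans} to compare $\|x\|_{\Phi_w}$ with $\|\sum \varepsilon_n dx_n\|_{\Phi_w}$, then applies the Khintchine inequality \eqref{eq:Khint1} conditionally (replacing $\M$ by $\M\bar\otimes\B(\ell^2)$ and grouping differences into martingale differences as in \cite{LPP}) to pass to the column/row square functions with an infimum over decompositions $dx_n = dy_n + dz_n$. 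Stein's inequality in the weak type $\Phi$-moment form, Theorem \ref{th:SteinInequC}, is then used to show the decomposition can be taken with $\{dy_n\}$ a genuine martingale difference sequence in $L_\Phi^w(\M,\ell_C^2)$ and $\{dz_n\}$ in $L_\Phi^w(\M,\ell_R^2)$. The main obstacle is bookkeeping: one has to verify that the conditional Khintchine inequality can be invoked with the grouping of Rademacher variables into blocks indexed by $n$, and that the constants remain independent of the number of differences, so that the finite-sequence statement of Theorem \ref{th:KhintInequa} yields the infinite-sum statement here by a limiting argument (using Proposition \ref{prop:conv}). Since all of these steps are literal transcriptions of the arguments in \cite{BC} with the $L_\Phi$-moment functional replaced by the weak type $\Phi$-moment functional $\|\cdot\|_{\Phi_w}$, and since the interpolation results of Section \ref{Interp} supply exactly the boundedness statements needed at each step, I would present the proof by indicating these substitutions and referring to \cite{BC} for the combinatorial details rather than reproducing them in full.
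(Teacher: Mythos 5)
Your overall plan --- transplant the $\Phi$-moment argument of \cite{BC} to the weak type setting by replacing each interpolation step with an appeal to Section \ref{Interp} and each Khintchine step with Theorem \ref{th:KhintInequa} --- is exactly what the paper does (the paper's proof is literally a one-sentence pointer to Theorem 5.1 of \cite{BC} plus Theorem \ref{th:KhintInequa}), and your treatment of case (1) is the right shape: randomize via Theorem \ref{th:MartingaleTrans}, apply Theorem \ref{th:KhintInequa}(1) in a suitably enlarged algebra, and stabilize the decomposition with Stein's inequality and a limiting argument.

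The gap is in your case (2). For the direction $\|x\|_{\Phi_w} \lesssim \|(\sum|dx_n|^2)^{1/2}\|_{\Phi_w} + \|(\sum|dx_n^*|^2)^{1/2}\|_{\Phi_w}$ you propose to apply Corollary \ref{cor:interstrong} to ``the map reconstructing the martingale from its differences,'' claiming this is the same situation as in Theorems \ref{th:SteinInequC} and \ref{th:MartingaleTrans}. It is not. The hypothesis of Theorem \ref{th:inter}/Corollary \ref{cor:interstrong} is that $T$ be a quasilinear operator defined on all of $L_0(\M)$ (or $L_0(\M\bar\otimes\B(\el^2))$) and simultaneously bounded on two $L_p$ spaces; that is precisely what $T_\alpha$ in Theorem \ref{th:MartingaleTrans} and Stein's projection in Theorem \ref{th:SteinInequC} are. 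The reconstruction map $(a_n) \mapsto \sum a_n$ is defined only on the closed subspace of column matrices whose entries form a martingale difference sequence, and for $p>2$ it is \emph{not} bounded from that column subspace of $L_p(\M\bar\otimes\B(\el^2))$ to $L_p(\M)$ --- boundedness requires the maximum of the column \emph{and} row norms, which is exactly the content of Burkholder--Gundy. So there is no globally defined quasilinear operator on $L_0$ to which Corollary \ref{cor:interstrong} applies, and the interpolation shortcut fails. The paper handles case (2) the same way as case (1), i.e.\ by randomization and then Theorem \ref{th:KhintInequa}(2): one uses the unconditionality of martingale differences to pass from $\|x\|_{\Phi_w(\M)}$ to $\|\sum_n \varepsilon_n dx_n\|_{\Phi_w(L_\8(\Omega)\bar\otimes\M)}$ (the map $x \mapsto \sum\varepsilon_n dx_n$ \emph{is} a linear operator on $L_0(\M)$ bounded on each $L_p$, $1<p<\8$, so Corollary \ref{cor:interstrong} applies to it), and then \eqref{eq:Khint2} turns this into the sum of column and row square functions. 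You should replace your case (2) argument by this route; otherwise the reverse inequality is not justified.
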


\begin{proof}
The proof is similar to the one of Theorem 5.1 in \cite{BC} through using Theorem \ref{th:KhintInequa} and the details are omitted.
\end{proof}

\begin{remark}
All inequalities above are left open for $1< a_{\Phi} \le 2 \le b_{\phi}<\8.$ At the time of this writing, we do not see how to formulate a meaningful statement for this case. However, our argument works well in the commutative case for all cases $1< a_{\phi} \le b_{\Phi} < \8.$
\end{remark}


\end{document}